\theoremstyle{plain}
\newtheorem{theorem}{Theorem}
\newtheorem{lemma}[theorem]{Lemma}
\newtheorem{corollary}[theorem]{Corollary}
\theoremstyle{definition}
\newtheorem{definition}[theorem]{Definition}
\newtheorem{example}[theorem]{Example}
\theoremstyle{remark}
\begin{document}

\title{A characterization of $Q$-polynomial distance-regular graphs using the intersection numbers}
\author{Supalak  Sumalroj\\
	\small Department of Mathematics, Silpakorn University, 
	Nakhon Pathom, Thailand\\
	\small sumalroj\_s@silpakorn.edu
}
\date{}
\maketitle

\begin{abstract}
	We consider a primitive distance-regular graph $\Gamma$ with diameter at least $3$.
	We use the intersection numbers of $\Gamma$ to find a positive semidefinite matrix $G$ with integer entries.
	We show that $G$ has determinant zero if and only if $\Gamma$ is $Q$-polynomial.
\end{abstract}


\section{Introduction}

Let $\Gamma$ denote a distance-regular graph with diameter $D\geq3$. 
In the literature there are a number of characterizations for the $Q$-polynomial condition on $\Gamma$.
There is the balanced set characterization \cite[Theorem 1.1]{T_characterization of P and Q}, \cite[Theorem 3.3]{T_New}.
There is a characterization involving the dual distance matrices \cite[Theorem 3.3]{T_New}.
There is a characterization involving the intersection numbers $a_i$ \cite[Theorem 3.8]{Pascasio}; cf. \cite[Theorem 5.1]{Hanson}.
There is a characterization involving a tail in a representation diagram \cite[Theorem 1.1]{Jurisic_Terwilliger_Zitnik}.
There is a characterization involving a pair of primitive idempotents \cite[Theorem 1.1]{Kurihara and Nozaki}; cf. \cite[Theorem 1.1]{Nomura and Terwilliger}.

In this paper we obtain the following characterization of the $Q$-polynomial property.
Assume $\Gamma$ is primitive.
We use the intersection numbers of $\Gamma$ to find a positive semidefinite matrix $G$ with integer entries.
We show that $G$ has determinant zero if and only if $\Gamma$ is $Q$-polynomial.
Our main result is Theorem \ref{primitive}.

\section{Preliminaries}
	
Let $X$ denote a nonempty finite set. 
Let ${{\rm{Mat}}}_X(\mathbb{C})$ denote the $\mathbb{C}$-algebra consisting of the matrices whose rows and columns are indexed by $X$ and whose entries are in $\mathbb{C}$. 
For $B\in{{\rm{Mat}}_X(\mathbb{C})}$ let $\overline{B}$ and $B^t$ denote the complex conjugate and the transpose of $B$, respectively.
Let $V=\mathbb{C}^X$ denote the vector space over $\mathbb{C}$ consisting of column vectors with coordinates indexed by $X$ and entries in $\mathbb{C}$.
Observe that ${{\rm{Mat}}}_X(\mathbb{C})$ acts on $V$ by left multiplication. 
We endow $V$ with the Hermitean inner product $(\;,\;)$ such that  $(u,v)=u^t\overline{v}$ for all $u,v\in{V}$.
The inner product $(\;,\;)$ is positive definite.
For $B\in{{\rm{Mat}}_X(\mathbb{C})}$ we obtain $(u,Bv)=(\overline{B}^tu,v)$ for all $u,v\in{V}$.
We endow ${\rm{Mat}}_X(\mathbb{C})$ with the Hermitean inner product $\langle \;,\;\rangle$  such that  $\langle R,S\rangle =tr(R^t\overline{S})$ for all $R, S\in{{\rm{Mat}}_X(\mathbb{C})}$.
The inner product $\langle \;,\;\rangle$ is positive definite.

Let $\Gamma=(X,R)$ denote a finite, undirected, connected graph, without loops or multiple edges, with vertex set $X$ and edge set $R$.
Let $\partial$ denote the shortest path-length distance function for $\Gamma$.
Define the diameter $D:=\text{max}\{\partial(x,y)| x, y\in{X}\}$. For a vertex $x\in{X}$ and an integer $i\geq0$ define
$\Gamma_i(x)=\{y\in{X}|\partial(x,y)=i\}$.
For notational convenience abbreviate $\Gamma(x)=\Gamma_1(x)$.
For an integer $k\geq 0$, we call the graph $\Gamma$ \textit{regular with valency $k$} whenever $|\Gamma(x)|=k$ for all $x\in{X}$.
We say that $\Gamma$ is \textit{distance-regular} whenever for all integers $h, i, j$ $(0\leq h, i, j\leq D)$ and for all $x, y \in{X}$ with $\partial(x, y) = h$, the number
$$p^h_{ij}:=|\Gamma_i(x)\cap\Gamma_j(y)|$$
is independent of $x$ and $y$.
The integers $p^h_{ij}$ are called the \textit{intersection numbers} of $\Gamma$.
From now on we assume $\Gamma$ is distance-regular with diameter $D\geq 3$.
We abbreviate $c_i:=p^i_{1,i-1}$ $(1\leq i \leq D)$, $a_i:=p^i_{1i}$ $(0\leq i \leq D)$, $b_i:=p^i_{1,i+1}$ $(0\leq i \leq D-1)$, $k_i:=p^0_{ii}$ $(0\leq i \leq D)$, and $c_0=0$, $b_D=0$.
Observe that $\Gamma$ is regular with valency $k=b_0$ and $c_i + a_i + b_i = k$ $(0\leq i \leq D)$.
By \cite[p. 127]{BCN} the following holds for $0\leq h, i, j\leq D$:
(i) $p^h_{ij}=0$ if one of $h, i, j$ is greater than the sum of the other two; and (ii) $p^h_{ij}\neq0$ if one of $h, i, j$ equals the sum of the other two.
For $0\leq i \leq D$, let $A_i$ denote the matrix in ${\rm{Mat}}_X(\mathbb{C})$ with $(x,y)$-entry
\[ (A_i)_{xy}=
\begin{cases}
1       & \quad \text{if } \quad \partial(x,y)=i,\\
0  & \quad \text{if } \quad \partial(x,y)\neq i,\\
\end{cases}
\qquad \qquad x,y\in{X}.
\]
We call $A_i$ the \textit{$i$-th distance matrix} of $\Gamma$.
We call $A=A_1$ the \textit{adjacency matrix}  of $\Gamma$. 
Observe that $A_i$ is real and symmetric for $0\leq i \leq D$. 
Note that $A_0 = I$, where $I$ is the identity matrix.
Observe that $\sum_{i=0}^{D} A_i=J$, where $J$ is the all-ones matrix in ${\rm{Mat}}_X(\mathbb{C})$.
Observe that for $0 \leq i, j \leq D$, $A_iA_j =\sum_{h=0}^{D} p^h_{ij}A_h$.
For integers $h,i,j$ $(0\leq h,i,j \leq D)$ we have 
\begin{align} 
&p^h_{0j}=\delta_{hj} \label{p^h_{0j}} \\
&p^0_{ij}=\delta_{ij}k_i \label{p^0_{ij}}  \\
&p^h_{ij}=p^h_{ji} \label{p^h_{ij}=p^h_{ji}} \\
&k_hp^h_{ij}  = k_ip^i_{hj} = k_jp^j_{ih}.  \label{k_hp^h_{ij}}
\end{align}

\noindent For $0 \leq i,j\leq D$ we have $A(A_i A_j)= (A A_i)A_j$. 
This gives a recursion
\begin{equation}\label{formula ai,bi,ci}
c_{i+1}p^h_{i+1,j}+a_{i}p^h_{ij}+b_{i-1}p^h_{i-1,j}=
c_{h}p^{h-1}_{ij}+a_{h}p^h_{ij}+b_{h}p^{h+1}_{ij} 
\end{equation}
\noindent that can be used to compute the intersection numbers.

Let $M$ denote the subalgebra of ${\rm{Mat}}_X(\mathbb{C})$ generated by $A$. 
By \cite[p. 127]{BCN} the matrices $A_0,A_1,...,A_D$ form a basis for $M$.
We call $M$ the \textit{Bose-Mesner algebra of} $\Gamma$. 
By \cite[p. 45]{BCN}, $M$ has a basis $E_0,E_1,...,E_D$ such that 
(i) $E_0 = |X|^{-1}J$; 
(ii) $\sum_{i=0}^{D} E_i=I$;
(iii) $E_i^t=E_i$ $(0\leq i \leq D)$;
(iv) $\overline{E_i}=E_i$ $(0\leq i \leq D)$;
(v) $E_iE_j = \delta_{ij}E_i$ $(0\leq i,j \leq D)$.
The matrices $E_0, E_1,...,E_D$ are called the \textit{primitive idempotents} of $\Gamma$, and $E_0$ is called the \textit{trivial} idempotent.
%
%
For $0\leq i \leq D$ let $m_i$ denote the rank of $E_i$.
%
%
%
\noindent Let $\lambda$ denote an indeterminate.
Define polynomials $\{v_i\}_{i=0}^{D+1}$ in $\mathbb{C}[\lambda]$ by $v_0=1$, $v_1=\lambda$, and
\begin{align*}
\lambda v_i=c_{i+1}v_{i+1}+a_iv_i+b_{i-1}v_{i-1} \qquad  (1\leq i\leq D),
\end{align*}

\noindent where $c_{D+1}:=1$.
By \cite[p. 128]{BCN} and \cite[Lemma 3.8]{T_subconstituentI}, the following hold:
(i) $deg$ $v_i=i$ $(0\leq i \leq D+1)$; 
(ii) the coefficient of $\lambda^i$ in $v_i$ is $(c_1c_2\dots c_i)^{-1}$ $(0\leq i \leq D+1)$;
(iii) $v_i(A)=A_i$ $(0\leq i \leq D)$;
(iv) $v_{D+1}(A)=0$;
(v) the distinct eigenvalues of $\Gamma$ are precisely the zeros of $v_{D+1}$; call these $\theta_0, \theta_1, ..., \theta_D$.
Define a matrix $B\in{{\rm{Mat}}_{D+1}(\mathbb{C})}$ as follows:

\[
B=\begin{bmatrix}
a_0    & b_0       &          & 		& \bf{0}  	\\
c_1    & a_1       &  b_1     &   		& 				\\
	   & c_2       &  a_2     &  \ddots & 				\\
	   &		   &  \ddots  & \ddots  & b_{D-1}		\\
\bf{0}  &      &          & c_D 	& a_D
\end{bmatrix}.
\]
We call $B$ the \textit{intersection matrix} of $\Gamma$.
Note that $A$ has the same minimal polynomial as $B$.
Moreover the minimal polynomial of $B$ is the characteristic polynomial of $B$.
For an eigenvalue $\theta$ of $B$ we have $vB=\theta v$ where $v$ is a row vector $v=(v_0(\theta),v_1(\theta),...,v_D(\theta))$.
Define polynomials $\{u_i\}_{i=0}^{D}$ in $\mathbb{C}[\lambda]$ by 
$u_0=1$, $u_1=\lambda/k$, and
\begin{align*}
\lambda u_i=c_{i}u_{i-1}+a_iu_i+b_{i}u_{i+1} \qquad (1\leq i\leq D-1).
\end{align*}

\noindent Observe that $u_i=v_i/k_i$ $(0\leq i \leq D)$.
For an eigenvalue $\theta$ of $B$ we have $Bu=\theta u$ where $u$ is a column vector $u=(u_0(\theta),u_1(\theta),...,u_D(\theta))^t$.
By \cite[p. 131, 132]{BCN}, 
\begin{align} 
&A_j=\displaystyle\sum_{i=0}^{D}v_j(\theta_i)E_i  &(0\leq j \leq D), \label{A_j}\\ 
&E_j=|X|^{-1}m_j\displaystyle\sum_{i=0}^{D} u_i(\theta_j)A_i  &(0\leq j \leq D). \label{E_j}
\end{align}

\noindent Since $E_iE_j = \delta_{ij}E_i$ and by (\ref{A_j}), (\ref{E_j}) we have $A_jE_i = E_iA_j$ $(0 \leq i,j \leq D)$.

For $1\leq i \leq D$ let $\Gamma_i$ denote the graph with vertex set $X$ where vertices are adjacent in $\Gamma_i$ whenever they are at distance $i$ in $\Gamma$.
We observe that $\Gamma_1=\Gamma$.
The graph $\Gamma$ is said to be \textit{primitive} whenever $\Gamma_i$ is  connected for $1\leq i \leq D$.

\begin{lemma} $\rm{(See ~\cite [Proposition~4.4.7]{BCN}.)}$ \label{primitive_S_alt}
	Assume $\Gamma$ is primitive.
	Then $u_i(\theta_j)\neq 1$ for $1\leq i,j \leq D$.
\end{lemma}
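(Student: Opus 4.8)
The plan is to translate the condition $u_i(\theta_j)=1$ into a statement about the spectrum of the distance matrix $A_i$, and then to exploit the connectedness of $\Gamma_i$ via the Perron--Frobenius theorem. Since $u_i=v_i/k_i$, the equation $u_i(\theta_j)=1$ is equivalent to $v_i(\theta_j)=k_i$, so it suffices to show that $k_i$ is not among the values $v_i(\theta_j)$ for $1\le j\le D$. By (\ref{A_j}) we have $A_i=\sum_{l=0}^{D}v_i(\theta_l)E_l$, and using $E_lE_j=\delta_{lj}E_l$ this gives $A_iE_j=v_i(\theta_j)E_j$. Hence each $v_i(\theta_j)$ is an eigenvalue of $A_i$, and because the $E_l$ are mutually orthogonal idempotents summing to $I$, the eigenspace of $A_i$ for any value $\mu$ is the orthogonal direct sum of the images of those $E_l$ with $v_i(\theta_l)=\mu$; in particular the multiplicity of $\mu$ as an eigenvalue of $A_i$ equals $\sum_{l:\,v_i(\theta_l)=\mu} m_l$.

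Next I would identify $k_i$ as the spectral radius of $A_i$ and pin down its multiplicity. The matrix $A_i$ is the symmetric zero-one adjacency matrix of the graph $\Gamma_i$, which is regular with valency $k_i$; thus every eigenvalue satisfies $|v_i(\theta_l)|\le k_i$, so $k_i$ is the spectral radius, and it is attained at $l=0$ because $E_0=|X|^{-1}J$ and $A_iJ=k_iJ$ force $A_iE_0=k_iE_0$, i.e.\ $v_i(\theta_0)=k_i$. Since $\Gamma$ is primitive, $\Gamma_i$ is connected for $1\le i\le D$, so $A_i$ is a nonnegative irreducible matrix; by Perron--Frobenius its spectral radius $k_i$ is a simple eigenvalue.

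Finally I would combine these facts. The multiplicity of $k_i$ as an eigenvalue of $A_i$ is $\sum_{l:\,v_i(\theta_l)=k_i} m_l$, and this equals $1$ by simplicity. The index $l=0$ already contributes $m_0=1$, so no other index $l$ can satisfy $v_i(\theta_l)=k_i$, since each such $l$ would add a further positive summand $m_l\ge 1$. In particular $v_i(\theta_j)\neq k_i$, hence $u_i(\theta_j)\neq 1$, for all $j$ with $1\le j\le D$; as this holds for every $i$ with $1\le i\le D$, the claim follows.

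The step I would be most careful about is the simplicity of the top eigenvalue, which is the real content of the argument: it rests on the chain ``$\Gamma$ primitive $\Rightarrow$ each $\Gamma_i$ connected $\Rightarrow$ $A_i$ irreducible'' together with Perron--Frobenius, and on verifying that $k_i$ is genuinely the spectral radius (so that the simplicity of the Perron eigenvalue, rather than of some other eigenvalue of equal modulus such as a possible $-k_i$ in the bipartite case, is what we need). Everything else is a routine bookkeeping of multiplicities through the spectral decomposition $A_i=\sum_l v_i(\theta_l)E_l$.
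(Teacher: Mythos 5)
Your proof is correct. The paper gives no argument for this lemma---it simply cites \cite[Proposition 4.4.7]{BCN}---and what you have written is precisely the standard argument underlying that citation: the multiplicity of $k_i$ as an eigenvalue of $A_i$ equals $\sum_{l\colon v_i(\theta_l)=k_i} m_l$, which already receives the contribution $m_0=1$ from $v_i(\theta_0)=k_i$, while connectedness of $\Gamma_i$ and Perron--Frobenius force that multiplicity to be exactly $1$. Your closing caveat is well placed: the relevant fact is the simplicity of the Perron root $+k_i$ itself (not of every eigenvalue of modulus $k_i$), and that is exactly what irreducibility of the nonnegative matrix $A_i$ delivers.
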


We now define a  matrix $S\in{{\rm{Mat}}_{D+1}(\mathbb{C})}$.

\begin{definition} \label{matrixS}
	Let $S\in{{\rm{Mat}}_{D+1}(\mathbb{C})}$ denote the transition matrix from the basis $\{A_i\}_{i=0}^D$ of $M$ to the basis $\{E_i\}_{i=0}^D$ of $M$.
	Thus 	
	\begin{align*} 
	& & &\qquad E_j=\displaystyle\sum_{i=0}^{D} S_{ij}A_i  &(0\leq j \leq D),\\
	& & &\qquad A_j=\displaystyle\sum_{i=0}^{D} (S^{-1})_{ij}E_i  &(0\leq j \leq D).
	\end{align*}
\end{definition}

\begin{lemma} \label{entries of S}
	The entries of $S$ and $S^{-1}$ are given below.
	For $0\leq i,j \leq D$,
	\begin{align*}
	S_{ij}=|X|^{-1}m_ju_i(\theta_j), \qquad \qquad
	(S^{-1})_{ij}=v_j(\theta_i).
	\end{align*}
\end{lemma}
\begin{proof}
	Immediate from Definition \ref{matrixS} and (\ref{A_j}), (\ref{E_j}).
\end{proof}



We recall the $Q$-polynomial property.
Let $\circ$ denote the entry-wise multiplication in ${\rm{Mat}}_X(\mathbb{C})$.
Note that $A_i\circ A_j = \delta_{ij}A_i$ for $0 \leq i, j \leq D$.
So $M$ is closed under $\circ$. 
By \cite[p. 377]{T_subconstituentI}, there exist scalars $q^h_{ij}\in{\mathbb{C}}$ such that
\begin{equation}\label{E_i_circE_j} 
\begin{split}
E_i\circ E_j=|X|^{-1}\displaystyle\sum_{h=0}^{D} q^h_{ij}E_h  \quad \qquad (0\leq i,j \leq D).
\end{split}
\end{equation}

\noindent We call the $q^h_{ij}$ the \textit{Krein parameters} of $\Gamma$. 
By \cite[p. 48, 49]{BCN}, these parameters are real and nonnegative for $0\leq h,i,j\leq D$.
The graph $\Gamma$ is said to be \textit{$Q$-polynomial} with respect to the ordering $E_0,E_1,...,E_D$ whenever the following hold for $0\leq h,i,j\leq D$: 
(i) $q^h_{ij}=0$ if one of $h, i, j$ is greater than the sum of the other two; and (ii) $q^h_{ij}\neq0$ if one of $h, i, j$ equals the sum of the other two.
Let $E$ denote a primitive idempotent of $\Gamma$.
We say that $\Gamma$ is \textit{$Q$-polynomial with respect t}o $E$ whenever there exists a $Q$-polynomial ordering $E_0,E_1,...,E_D$ of the primitive idempotents such that $E=E_1$.

We recall the dual Bose-Mesner algebra of $\Gamma$.
Fix a vertex $x\in{X}$. 
For $0 \leq i \leq D$ let $E_i^*=E_i^*(x)$ denote the diagonal matrix in ${\rm{Mat}}_X(\mathbb{C})$ with $(y,y)$-entry
\[ (E_i^*)_{yy}=
\begin{cases}
1       & \quad \text{if } \quad \partial(x,y)=i,\\
0  & \quad \text{if } \quad \partial(x,y)\neq i,\\
\end{cases}
\qquad \qquad y\in{X}.
\]
We call $E_i^*$ the \textit{$i$-th dual idempotent} of $\Gamma$ with respect to $x$. 
Observe that (i) $\sum_{i=0}^{D} E_i^*=I$;
(ii)  $E_i^{*t}=E_i^*$ $(0\leq i \leq D)$;
(iii) $\overline{E_i^*}=E_i^*$ $(0\leq i \leq D)$; 
(iv) $E_i^*E_j^*=\delta_{ij}E_i^*$ $(0\leq i,j \leq D)$.
By construction $E_0^*,E_1^*,...,E_D^*$ are linearly independent.
Let $M^*=M^*(x)$ denote the subalgebra of ${\rm{Mat}}_X(\mathbb{C})$ with  basis $E_0^*,E_1^*,...,E_D^*$.
We call $M^*$ the \textit{dual Bose-Mesner algebra} of $\Gamma$ with respect
to $x$.

We now recall the dual distance matrices of $\Gamma$.
For $0 \leq i \leq D$ let $A_i^*=A_i^*(x)$ denote the diagonal matrix in ${\rm{Mat}}_X(\mathbb{C})$ with $(y,y)$-entry
\begin{equation}\label{A_i^*} 
\begin{split}
(A_i^*)_{yy}=|X|(E_i)_{xy} \qquad \qquad \qquad y\in{X}.
\end{split}
\end{equation} 
We call $A_i^*$ the \textit{dual distance matrix} of $\Gamma$ with respect to $x$ and $E_i$.
By \cite [p. 379]{T_subconstituentI}, the matrices $A_0^*,A_1^*,...,A_D^*$ 
form a basis for $M^*$.
Observe that
(i) $A_0^* = I$; 
(ii) $\sum_{i=0}^{D} A_i^*=|X|E_0^*$;
(iii) $A_i^{*t}=A_i^*$ $(0\leq i \leq D)$;
(iv) $\overline{A_i^*}=A_i^*$ $(0\leq i \leq D)$;
(v) $A_i^*A_j^* = \sum_{h=0}^{D} q^h_{ij}A_h^*$ $(0\leq i,j \leq D)$.
From (\ref{A_j}), (\ref{E_j}) we have
\begin{align}
&A_j^*=m_j\displaystyle\sum_{i=0}^{D}u_i(\theta_j)E_i^*  &(0\leq j \leq D), \label{A_j^*} \\
&E_j^*=|X|^{-1}\displaystyle\sum_{i=0}^{D} v_j(\theta_i)A_i^*  &(0\leq j \leq D). \label{E_j^*} 
\end{align}

\begin{lemma} \label{transS*}
	The matrix $|X|S$ is the transition matrix from the basis $\{E^*_i\}_{i=0}^D$ of $M^*$ to the basis $\{A^*_i\}_{i=0}^D$ of $M^*$.
	Thus
	\begin{align*} 
	&A_j^*=|X|\displaystyle\sum_{i=0}^{D} {S}_{ij}E_i^*  &(0\leq j \leq D),\\ 
	&E_j^*=|X|^{-1}\displaystyle\sum_{i=0}^{D} (S^{-1})_{ij}A_i^*  &(0\leq j \leq D). 
	\end{align*}
\end{lemma}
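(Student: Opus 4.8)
The plan is to verify the two displayed identities by directly substituting the closed forms for the entries of $S$ and $S^{-1}$ from Lemma \ref{entries of S} into equations (\ref{A_j^*}) and (\ref{E_j^*}), and then to read off the transition-matrix interpretation from the definition of a transition matrix.

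First I would expand $A_j^*$ in terms of $E_0^*,\dots,E_D^*$. Starting from (\ref{A_j^*}), namely $A_j^*=m_j\sum_{i=0}^D u_i(\theta_j)E_i^*$, I would insert the relation $m_j u_i(\theta_j)=|X|\,S_{ij}$, which is precisely the formula $S_{ij}=|X|^{-1}m_j u_i(\theta_j)$ of Lemma \ref{entries of S} multiplied through by $|X|$. This immediately yields $A_j^*=|X|\sum_{i=0}^D S_{ij}E_i^*$, the first asserted identity. Next I would carry out the analogous computation for $E_j^*$: starting from (\ref{E_j^*}), namely $E_j^*=|X|^{-1}\sum_{i=0}^D v_j(\theta_i)A_i^*$, I would use $(S^{-1})_{ij}=v_j(\theta_i)$ from Lemma \ref{entries of S} to replace $v_j(\theta_i)$ by $(S^{-1})_{ij}$, giving $E_j^*=|X|^{-1}\sum_{i=0}^D (S^{-1})_{ij}A_i^*$, the second asserted identity.

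Finally, since $\{E_i^*\}_{i=0}^D$ and $\{A_i^*\}_{i=0}^D$ are each a basis of $M^*$ (as recorded in the excerpt), the first identity says exactly that the $(i,j)$-entry of the matrix expressing each $A_j^*$ in terms of the $E_i^*$ equals $|X|\,S_{ij}$; that is, $|X|S$ is the transition matrix from $\{E_i^*\}$ to $\{A_i^*\}$, as claimed. The second identity, whose entries are $|X|^{-1}(S^{-1})_{ij}$, then records the inverse transition, consistent with $(|X|S)^{-1}=|X|^{-1}S^{-1}$.

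I expect no genuine obstacle here: the content is entirely a matter of matching the two expansions (\ref{A_j^*}) and (\ref{E_j^*}) against the entry formulas of Lemma \ref{entries of S}. The only point requiring care is the bookkeeping of indices, namely verifying that the summation index $i$ plays the role of the row index and $j$ the role of the column index, so that the transition-matrix convention fixed in Definition \ref{matrixS} is respected throughout.
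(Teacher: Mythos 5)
Your proposal is correct and follows exactly the paper's route: the paper's proof is simply ``Immediate from Lemma \ref{entries of S} and (\ref{A_j^*}), (\ref{E_j^*})'', and you have spelled out precisely that substitution. No issues.
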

\begin{proof}
	Immediate from Lemma \ref{entries of S} and (\ref{A_j^*}), (\ref{E_j^*}).
\end{proof}




\section{The matrices $S^{alt}, (S^{-1})^{alt}, S'$}

Recall the matrix $S$ from Definition \ref{matrixS}.
We now modify the matrices $S,S^{-1}$ to obtain  $D\times D$ matrices $S^{alt},(S^{-1})^{alt}$ as follows:
\begin{align} 
&\qquad (S^{alt})_{ij}= S_{ij}-S_{0j}  &(1\leq i,j \leq D), \label{S_alt} \\
&\qquad (S^{-1})^{alt}_{ij}= (S^{-1})_{ij}  &(1\leq i,j \leq D). \label{S_alt_inverse}
\end{align}

\begin{lemma} \label{transS^alt}
	The following (i)--(iv) hold.
	\begin{enumerate}[(i)]
		\item $S^{alt}$ is the transition matrix from $\{A_2E_i^*A-AE_i^*A_2\}_{i=1}^D$ to $\{A_2A_i^*A-AA_i^*A_2\}_{i=1}^D$.
		\item $S^{alt}$ is the transition matrix from $\{A_3E_i^*-E_i^*A_3\}_{i=1}^D$ to $\{A_3A_i^*-A_i^*A_3\}_{i=1}^D$.
		\item $S^{alt}$ is the transition matrix from $\{A_2E_i^*-E_i^*A_2\}_{i=1}^D$ to $\{A_2A_i^*-A_i^*A_2\}_{i=1}^D$.
		\item $S^{alt}$ is the transition matrix from $\{AE_i^*-E_i^*A\}_{i=1}^D$ to $\{AA_i^*-A_i^*A\}_{i=1}^D$.
		\item $(S^{-1})^{alt}$ and $S^{alt}$ are inverses.
	\end{enumerate}
\end{lemma}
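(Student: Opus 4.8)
The plan is to treat parts (i)--(iv) uniformly through a single linear map and then deduce (v) by a short matrix computation. For $P,Q\in M$ define the linear map $\Phi_{P,Q}\colon M^{*}\to{\rm Mat}_X(\mathbb{C})$ by $\Phi_{P,Q}(N)=PNQ-QNP$. Parts (i)--(iv) are exactly the four instances $(P,Q)\in\{(A_2,A),(A_3,I),(A_2,I),(A,I)\}$: in each case the source family is $\{\Phi_{P,Q}(E_i^{*})\}_{i=1}^D$ and the target family is $\{\Phi_{P,Q}(A_i^{*})\}_{i=1}^D$. Two structural facts drive the whole argument. First, since $M$ is commutative and each chosen $P,Q$ lies in $M$, we have $\Phi_{P,Q}(I)=PQ-QP=0$. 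Second, $A_0^{*}=I$, so $\Phi_{P,Q}(A_0^{*})=\Phi_{P,Q}(I)=0$, which is why the target family may omit the index $i=0$ with no loss.

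First I would substitute for $A_j^{*}$ using Lemma \ref{transS*}, namely $A_j^{*}=|X|\sum_{i=0}^D S_{ij}E_i^{*}$, and use linearity of $\Phi_{P,Q}$ to write $\Phi_{P,Q}(A_j^{*})$ as a combination of the $\Phi_{P,Q}(E_i^{*})$ with $0\le i\le D$. The index-$0$ term is then removed by the telescoping identity $\sum_{i=0}^D\Phi_{P,Q}(E_i^{*})=\Phi_{P,Q}\bigl(\sum_{i=0}^D E_i^{*}\bigr)=\Phi_{P,Q}(I)=0$, which gives $\Phi_{P,Q}(E_0^{*})=-\sum_{i=1}^D\Phi_{P,Q}(E_i^{*})$. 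Substituting this replaces each coefficient $S_{ij}$ by $S_{ij}-S_{0j}$, i.e. by $(S^{alt})_{ij}$ from (\ref{S_alt}), and collapses the sum to the range $1\le i\le D$. This exhibits $S^{alt}$ as the transition matrix from $\{\Phi_{P,Q}(E_i^{*})\}_{i=1}^D$ to $\{\Phi_{P,Q}(A_i^{*})\}_{i=1}^D$, up to the global normalization recorded in Lemma \ref{transS*}, and proves (i)--(iv) simultaneously. The one point requiring care is bookkeeping this normalization factor and checking that each chosen pair $(P,Q)$ indeed lies in the commutative algebra $M$, so that $\Phi_{P,Q}(I)=0$.

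For (v) I would argue purely with the matrices. Using (\ref{S_alt}), (\ref{S_alt_inverse}) and $SS^{-1}=I$, for $1\le i,j\le D$ compute
\begin{align*}
\sum_{k=1}^{D}(S^{-1})^{alt}_{ik}\,(S^{alt})_{kj}
&=\sum_{k=1}^{D}(S^{-1})_{ik}\bigl(S_{kj}-S_{0j}\bigr)\\
&=\Bigl(\delta_{ij}-(S^{-1})_{i0}S_{0j}\Bigr)-S_{0j}\sum_{k=1}^{D}(S^{-1})_{ik}.
\end{align*}
By Lemma \ref{entries of S} we have $(S^{-1})_{i0}=v_0(\theta_i)=1$, and from $\sum_{k=0}^{D}A_k=J=|X|E_0$ together with (\ref{A_j}) we obtain $\sum_{k=0}^{D}v_k(\theta_i)=|X|\delta_{i0}$, hence $\sum_{k=1}^{D}(S^{-1})_{ik}=-1$ for $i\ge 1$. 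Substituting these gives $(\delta_{ij}-S_{0j})+S_{0j}=\delta_{ij}$, so $(S^{-1})^{alt}S^{alt}=I$; as these are square matrices, they are inverses, which is (v).

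The main obstacle I anticipate is not any single hard step but the uniform organization: recognizing that (i)--(iv) are one computation parametrized by $(P,Q)$, and keeping the two index-$0$ reductions straight, namely the source telescoping via $\sum_{i=0}^D E_i^{*}=I$ and the target vanishing via $A_0^{*}=I$, alongside the normalization coming from Lemma \ref{transS*}. Once the map $\Phi_{P,Q}$ and the identity $\Phi_{P,Q}(I)=0$ are in place, each part reduces to the single shift $S_{ij}\mapsto S_{ij}-S_{0j}=(S^{alt})_{ij}$, and (v) follows from $SS^{-1}=I$ together with the row-sum identity for the $v_k(\theta_i)$.
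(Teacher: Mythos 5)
Your argument for (i)--(iv) is essentially the paper's: substitute $A_j^*=|X|\sum_{i=0}^D S_{ij}E_i^*$ from Lemma \ref{transS*}, then eliminate the $i=0$ term using $\sum_{i=0}^D E_i^*=I$ together with the vanishing of $PQ-QP$ for $P,Q\in M$, which shifts each coefficient to $S_{ij}-S_{0j}=(S^{alt})_{ij}$; your map $\Phi_{P,Q}$ merely packages the four cases uniformly, whereas the paper works out case (i) and declares (ii)--(iv) similar. Where you genuinely diverge is part (v). The paper proves (v) by also running the expansion in the reverse direction --- writing $A_2E_j^*A-AE_j^*A_2$ in terms of $\{A_2A_i^*A-AA_i^*A_2\}_{i=1}^D$ with coefficients $(S^{-1})^{alt}_{ij}$, the $i=0$ term dropping because $A_0^*=I$ --- and reading off invertibility from the two mutually inverse changes of coordinates. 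You instead verify $(S^{-1})^{alt}S^{alt}=I$ by a direct matrix computation, using $(S^{-1})_{i0}=v_0(\theta_i)=1$ and the row-sum identity $\sum_{k=0}^{D}v_k(\theta_i)=|X|\delta_{i0}$ coming from $\sum_{k=0}^{D}A_k=J=|X|E_0$; that computation checks out. Both routes are correct; yours has the advantage of not leaning on linear independence of the families $\{A_2E_i^*A-AE_i^*A_2\}_{i=1}^D$ (which the paper's deduction of (v) from the two transition formulas implicitly uses but never verifies), while the paper's route gets (v) almost for free once (i) has been done in both directions.
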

\begin{proof}
	$(i),(v)$ 
	For $1\leq j \leq D$ we write $A_2A_j^*A-AA_j^*A_2$ in terms of $\{A_2E_i^*A-AE_i^*A_2\}_{i=1}^{D}$.
	By Lemma \ref{transS*} and (\ref{S_alt}) and since $\sum_{i=0}^{D}E_i^*=I$, we have 
	\begin{align*}
	A_2A_j^*A-AA_j^*A_2
	&=|X|\displaystyle\sum_{i=0}^{D}(A_2E_i^*A-AE_i^*A_2)S_{ij}\\
	&=|X|(A_2E_0^*A-AE_0^*A_2)S_{0j}+|X|\displaystyle\sum_{i=1}^{D}(A_2E_i^*A-AE_i^*A_2)S_{ij} \\
	&=|X|(A_2(I-(E_1^*+\cdots+E_D^*))A-A(I-(E_1^*+\cdots+E_D^*))A_2)S_{0j}\\
	& \quad +|X|\displaystyle\sum_{i=1}^{D}(A_2E_i^*A-AE_i^*A_2)S_{ij} \\
	&=|X|\displaystyle\sum_{i=1}^{D}(A_2E_i^*A-AE_i^*A_2)(S_{ij}-S_{0j}) \\
	&=|X|\displaystyle\sum_{i=1}^{D}(A_2E_i^*A-AE_i^*A_2)(S^{alt})_{ij}.
	\end{align*}

	\noindent Next, for $1\leq j \leq D$ we write $A_2E_j^*A-AE_j^*A_2$ in terms of $\{A_2A_i^*A-AA_i^*A_2\}_{i=1}^{D}$.
	By Lemma \ref{transS*} and (\ref{S_alt_inverse}) and since $A_0^*=I$, we find 
	\begin{align*}
	A_2E_j^*A-AE_j^*A_2
	&=|X|^{-1}\displaystyle\sum_{i=0}^{D}(A_2A_i^*A-AA_i^*A_2)(S^{-1})_{ij}\\
	&=|X|^{-1}(A_2A_0^*A-AA_0^*A_2)(S^{-1})_{0j}\\
	& \quad +|X|^{-1}\displaystyle\sum_{i=1}^{D}(A_2A_i^*A-AA_i^*A_2)(S^{-1})_{ij} \\
	&=|X|^{-1}\displaystyle\sum_{i=1}^{D}(A_2A_i^*A-AA_i^*A_2)(S^{-1})_{ij} \\
	&=|X|^{-1}\displaystyle\sum_{i=1}^{D}(A_2A_i^*A-AA_i^*A_2)( S^{-1})^{alt}_{ij}.
	\end{align*}

	\noindent The result follows.
	
	\noindent $(ii)-(iv)$ Similar to the proof of $(i)$.
\end{proof}

Define a matrix 
\[
S'=\begin{bmatrix}
S^{alt}    		&        	&        	& \bf{0}  \\
& S^{alt}   &        	&   \\
&        	& S^{alt}  	&   \\
\bf{0}  	&        	&        	& S^{alt}  
\end{bmatrix},
\]
\noindent where $S^{alt}$ is from (\ref{S_alt}).
Observe that $S'$ is $4D\times 4D$.

\begin{lemma} \label{det_S'}
	$det(S')=\big(det(S^{alt})\big)^4$.
	Moreover $S'$ is invertible.
\end{lemma}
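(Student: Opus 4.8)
The plan is to exploit the fact that $S'$ is block-diagonal: its $4D \times 4D$ matrix has four diagonal blocks, each equal to the $D \times D$ matrix $S^{alt}$, and all off-diagonal blocks zero. First I would invoke the standard multiplicativity of the determinant over block-diagonal (indeed block-triangular) matrices, namely that for a matrix partitioned into diagonal blocks $B_1, B_2, \dots, B_r$ with zero below (or above) the diagonal blocks, the determinant factors as $\prod_{\ell=1}^{r} \det(B_\ell)$. Applying this with $r = 4$ and $B_1 = B_2 = B_3 = B_4 = S^{alt}$ yields $\det(S') = \big(\det(S^{alt})\big)^4$ at once. If one wants a self-contained justification of the block formula, it follows from repeated Laplace (cofactor) expansion along the first $D$ columns of $S'$, whose nonzero entries are confined to the first diagonal block, or from writing $S'$ as a product over which the determinant is multiplicative; either route is routine.

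For the ``moreover'' claim that $S'$ is invertible, I would argue that it suffices to show $\det(S') \neq 0$, equivalently $\det(S^{alt}) \neq 0$. This is immediate from Lemma \ref{transS^alt}$(v)$: there the matrices $(S^{-1})^{alt}$ and $S^{alt}$ are shown to be mutually inverse, so $S^{alt}$ is an invertible $D \times D$ matrix and hence $\det(S^{alt}) \neq 0$. Combining this with the determinant identity gives $\det(S') = \big(\det(S^{alt})\big)^4 \neq 0$, so $S'$ is invertible. Alternatively, and perhaps cleaner, one can exhibit the inverse of $S'$ explicitly as the block-diagonal matrix with four copies of $(S^{-1})^{alt}$ on the diagonal; block-diagonal matrices multiply block-by-block, so this candidate times $S'$ is the block-diagonal matrix with four copies of $(S^{-1})^{alt} S^{alt} = I$, which is the $4D \times 4D$ identity. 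This simultaneously re-proves invertibility without even needing the determinant computation.

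I do not anticipate any genuine obstacle here, since both assertions reduce to the elementary behavior of determinants and inverses under block-diagonal structure, together with the already-established invertibility of $S^{alt}$ from Lemma \ref{transS^alt}$(v)$. The only point requiring a word of care is making sure the block-determinant formula is stated for the correct (block-diagonal, hence block-triangular) shape of $S'$; this is exactly the form displayed in the definition of $S'$, so no extra hypotheses are needed.
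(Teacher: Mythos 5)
Your proposal is correct and follows exactly the route the paper intends: the paper's proof is simply ``Immediate from the construction of $S'$,'' and your argument---multiplicativity of the determinant over the block-diagonal structure, together with the invertibility of $S^{alt}$ supplied by Lemma \ref{transS^alt}(v)---is precisely the spelled-out version of that one-liner. No gaps; you have merely made explicit what the paper leaves to the reader.
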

\begin{proof}
	Immediate from the construction of $S'$.
\end{proof}

\begin{corollary} \label{matrixS'}
	The matrix $S'$ is the transition matrix from 
	$$\{A_2E_i^*A-AE_i^*A_2\}_{i=1}^D, 
	\{A_3E_i^*-E_i^*A_3\}_{i=1}^D, 
	\{A_2E_i^*-E_i^*A_2\}_{i=1}^D, 
	\{AE_i^*-E_i^*A\}_{i=1}^D$$ 
	\noindent to 
	$$\{A_2A_i^*A-AA_i^*A_2\}_{i=1}^D, 
	\{A_3A_i^*-A_i^*A_3\}_{i=1}^D, 
	\{A_2A_i^*-A_i^*A_2\}_{i=1}^D, 
	\{AA_i^*-A_i^*A\}_{i=1}^D.$$ 
\end{corollary}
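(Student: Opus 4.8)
The plan is to assemble the four change-of-basis relations recorded in Lemma \ref{transS^alt}(i)--(iv) into a single relation and then to read off that the resulting $4D\times 4D$ transition matrix is exactly the block-diagonal matrix $S'$. First I would fix, once and for all, the ordering of the combined source family by concatenating the four lists in the order displayed in the statement: the $D$ matrices $\{A_2E_i^*A-AE_i^*A_2\}_{i=1}^D$, followed by $\{A_3E_i^*-E_i^*A_3\}_{i=1}^D$, then $\{A_2E_i^*-E_i^*A_2\}_{i=1}^D$, and finally $\{AE_i^*-E_i^*A\}_{i=1}^D$. The combined target family is ordered in the same way, with each $E_i^*$ replaced by the corresponding $A_i^*$. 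This gives two lists of length $4D$.

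With this ordering, the key observation is that each of the four parts of Lemma \ref{transS^alt} expresses every target matrix of a given type purely in terms of the source matrices of the same type, with coefficient matrix $S^{alt}$; no part mixes two different types. Consequently, when the four relations are written simultaneously as a single matrix equation over the length-$4D$ lists, the coefficient matrix acquires a block structure: its $k$-th diagonal block (of size $D$) is $S^{alt}$, while every off-diagonal block is zero precisely because there are no cross-type contributions. This is exactly the block-diagonal matrix $S'$ from its definition.

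The remaining step is bookkeeping: I would verify that the $k$-th diagonal block of $S'$ is paired with the $k$-th sublist in both the source and target orderings, which is immediate from the construction of $S'$ as four copies of $S^{alt}$ down the diagonal together with the fact that I chose the two concatenations in the same order. Finally, to confirm that the two combined families are genuinely related as bases by an invertible transition matrix, I would invoke Lemma \ref{det_S'}, since $\det(S')=(\det S^{alt})^4\neq 0$ using that $S^{alt}$ is invertible by Lemma \ref{transS^alt}(v).

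I do not expect any genuine obstacle here: the mathematical content is already contained in Lemma \ref{transS^alt}, and the corollary is a repackaging. The only point requiring care is the alignment of the block ordering of $S'$ with the chosen ordering of the four concatenated families, and the observation that the absence of cross-type terms in parts (i)--(iv) is exactly what forces the off-diagonal blocks of the combined transition matrix to vanish.
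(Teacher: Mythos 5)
Your proposal is correct and follows essentially the same route as the paper, which simply declares the corollary immediate from Lemma \ref{transS^alt}: you are just making explicit the block-diagonal bookkeeping (no cross-type terms, so the combined transition matrix is four copies of $S^{alt}$ down the diagonal, i.e.\ $S'$). Nothing is missing.
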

\begin{proof}
	Immediate from Lemma \ref{transS^alt}.
\end{proof}


\section{The bilinear form $\langle \;,\;\rangle $}

Recall the positive definite Hermitean bilinear form $\langle \;,\;\rangle $ on ${\rm{Mat}}_X(\mathbb{C})$.

\begin{lemma}\label{formula} $\rm{(See ~\cite[Lemma~ 3.2]{T_subconstituentI}.)}$ For $0\leq h, i, j, r, s, t \leq D$,
	\begin{enumerate}[(i)]
		\item $\langle E_i^*A_jE_h^*,E_r^*A_sE_t^*\rangle =\delta_{ir}\delta_{js}\delta_{ht}k_hp^h_{ij}$,
		\item $\langle E_iA_j^*E_h,E_rA_s^*E_t\rangle =\delta_{ir}\delta_{js}\delta_{ht}m_hq^h_{ij}$.
	\end{enumerate}
\end{lemma}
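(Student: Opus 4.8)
The plan is to prove both identities by expanding $\langle R,S\rangle=tr(R^t\overline S)$ and exploiting that every matrix in sight is real and symmetric, so that conjugation is trivial and transposing a product simply reverses the factors. For (i) this gives
\[
\langle E_i^*A_jE_h^*,\,E_r^*A_sE_t^*\rangle=tr\bigl(E_h^*A_jE_i^*\,E_r^*A_sE_t^*\bigr),
\]
and for (ii) it gives $\langle E_iA_j^*E_h,\,E_rA_s^*E_t\rangle=tr\bigl(E_hA_j^*E_i\,E_rA_s^*E_t\bigr)$. In each case I would first absorb the outer idempotents: the relations $E_i^*E_r^*=\delta_{ir}E_i^*$ (resp. $E_iE_r=\delta_{ir}E_i$) and a cyclic rotation of the trace followed by $E_t^*E_h^*=\delta_{ht}E_h^*$ (resp. $E_tE_h=\delta_{ht}E_h$) peel off the factors $\delta_{ir}\delta_{ht}$, reducing the problem to the two core traces $tr(E_h^*A_jE_i^*A_s)$ and $tr(E_hA_j^*E_iA_s^*)$.

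For (i) the core trace is elementary because $E_h^*,E_i^*$ are diagonal. Expanding entrywise collapses the sum to $\sum_{y,w}(E_h^*)_{yy}(E_i^*)_{ww}(A_j)_{yw}(A_s)_{wy}$. Since $A_s$ is symmetric, $(A_j)_{yw}(A_s)_{wy}=(A_j\circ A_s)_{yw}=\delta_{js}(A_j)_{yw}$, which supplies the factor $\delta_{js}$. The surviving sum counts the ordered pairs $(y,w)$ with $\partial(x,y)=h$, $\partial(x,w)=i$, and $\partial(y,w)=j$; for each of the $k_h$ vertices $y$ with $\partial(x,y)=h$ there are exactly $p^h_{ij}=|\Gamma_i(x)\cap\Gamma_j(y)|$ choices of $w$, so the core trace equals $\delta_{js}k_hp^h_{ij}$, proving (i).

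Part (ii) is the delicate one, since the $E$'s are no longer diagonal. Here I would instead use the diagonal structure of $A_j^*,A_s^*$ together with the entry formula $(A_i^*)_{yy}=|X|(E_i)_{xy}$ from (\ref{A_i^*}). Expanding entrywise and substituting gives
\[
tr\bigl(E_hA_j^*E_iA_s^*\bigr)=|X|^2\sum_{y,z}(E_h)_{yz}(E_i)_{zy}(E_j)_{xz}(E_s)_{xy}.
\]
The key observation is that symmetry turns $(E_h)_{yz}(E_i)_{zy}$ into the Hadamard entry $(E_h\circ E_i)_{yz}$, so summing over $z$ yields $\bigl((E_h\circ E_i)E_j\bigr)_{yx}$; applying (\ref{E_i_circE_j}) and $E_lE_j=\delta_{lj}E_j$ collapses this to $|X|^{-1}q^j_{hi}(E_j)_{yx}$. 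Summing the remaining $(E_s)_{xy}$ over $y$, using $E_sE_j=\delta_{sj}E_s$ and the fact that $E_s\in M$ has constant diagonal $(E_s)_{xx}=tr(E_s)/|X|=m_s/|X|$, produces $\delta_{js}m_jq^j_{hi}$. The final move is to match this with the stated form: by the symmetry $q^j_{hi}=q^j_{ih}$ of the Krein parameters and the duality relation $m_jq^j_{ih}=m_hq^h_{ij}$ (the $q$-analogue of (\ref{k_hp^h_{ij}})), this equals $\delta_{js}m_hq^h_{ij}$, completing (ii). I expect the two genuinely non-mechanical points to be recognizing the hidden Hadamard product in the core trace of (ii) and carrying out this last index reconciliation; everything else is routine bookkeeping with the idempotent relations.
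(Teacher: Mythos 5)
Your proof is correct, and in fact the paper offers no proof of its own here --- it simply cites \cite[Lemma~3.2]{T_subconstituentI} --- so your argument (absorb the outer idempotents via $E_i^*E_r^*=\delta_{ir}E_i^*$ and cyclicity of the trace, then evaluate the core trace combinatorially for (i) and via the Hadamard product and (\ref{E_i_circE_j}) for (ii)) is essentially the standard one from the cited source. The only point worth flagging is that your computation for (ii) lands on $\delta_{ir}\delta_{js}\delta_{ht}\,m_jq^j_{hi}$, and the final reconciliation to $m_hq^h_{ij}$ uses $q^j_{hi}=q^j_{ih}$ and $m_jq^j_{ih}=m_hq^h_{ij}$, which are standard (see \cite{BCN}) but are neither stated nor proved in this paper, so they should be cited explicitly.
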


\begin{corollary} \label{p^h_ij,q^h_ij} $\rm{(See ~\cite[Lemma~ 3.2]{T_subconstituentI}.)}$
	For $0\leq h, i, j\leq D$,
	\begin{enumerate}[(i)]
		\item $E_i^*A_jE_h^*=0$ if and only if $p^h_{ij}=0$,
		\item $E_iA_j^*E_h=0$ if and only if $q^h_{ij}=0$.
	\end{enumerate}
\end{corollary}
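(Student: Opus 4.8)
The plan is to read off both equivalences directly from the diagonal case of Lemma \ref{formula}, using that the Hermitean form $\langle\;,\;\rangle$ on ${\rm{Mat}}_X(\mathbb{C})$ is positive definite. The one fact I would isolate at the outset is that positive definiteness gives, for every $B \in {\rm{Mat}}_X(\mathbb{C})$, the equivalence $\langle B,B\rangle = 0 \iff B = 0$.

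For part (i), I would take $B = E_i^*A_jE_h^*$ and specialize Lemma \ref{formula}(i) to $r=i$, $s=j$, $t=h$. Each Kronecker delta then equals $1$, so the inner product collapses to $\langle E_i^*A_jE_h^*, E_i^*A_jE_h^*\rangle = k_h p^h_{ij}$. Since $k_h = p^0_{hh}$ counts the vertices at distance $h$ from a fixed vertex and $\Gamma$ has diameter $D$, we have $k_h \neq 0$ for $0 \le h \le D$; hence the right-hand side vanishes exactly when $p^h_{ij} = 0$. Combining this with the positive-definiteness equivalence yields $E_i^*A_jE_h^* = 0$ if and only if $p^h_{ij} = 0$.

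For part (ii), I would argue identically with $B = E_iA_j^*E_h$, specializing Lemma \ref{formula}(ii) to $r=i$, $s=j$, $t=h$ to obtain $\langle E_iA_j^*E_h, E_iA_j^*E_h\rangle = m_h q^h_{ij}$. Here $m_h$ is the rank of the nonzero idempotent $E_h$, so $m_h \neq 0$, and the right-hand side vanishes precisely when $q^h_{ij} = 0$; positive definiteness then gives $E_iA_j^*E_h = 0$ if and only if $q^h_{ij} = 0$.

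There is no genuine obstacle in this argument: the whole content is the positive definiteness of $\langle\;,\;\rangle$ together with the nonvanishing of the scalars $k_h$ and $m_h$. The only point I would bother to justify explicitly is that these two scalars are nonzero, and both justifications are immediate from their definitions.
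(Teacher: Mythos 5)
Your proof is correct and follows exactly the intended derivation: the paper states this as an immediate corollary of Lemma~\ref{formula} (citing the same source), and the content is precisely the diagonal specialization $\langle B,B\rangle = k_h p^h_{ij}$ (resp.\ $m_h q^h_{ij}$) together with positive definiteness of $\langle\;,\;\rangle$ and the nonvanishing of $k_h$ and $m_h$. Nothing further is needed.
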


\begin{lemma} \label{<>formula} 
	For $0\leq h, i, j, r, s, t \leq D$ we have
	\begin{equation*}
	\langle A_iE_j^*A_h,A_rE_s^*A_t\rangle =\displaystyle\sum_{\ell=0}^{D} k_\ell p^\ell_{ir}p^\ell_{js}p^\ell_{ht}.
	\end{equation*}
\end{lemma}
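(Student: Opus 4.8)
The plan is to evaluate $\langle A_iE_j^*A_h,A_rE_s^*A_t\rangle$ directly at the level of matrix entries. Since $A_i,A_h,A_r,A_t$ are real and symmetric and $E_j^*,E_s^*$ are real symmetric diagonal matrices, every matrix in sight is real, so $\overline{A_rE_s^*A_t}=A_rE_s^*A_t$ and the inner product reduces to the entrywise one: $\langle A_iE_j^*A_h,A_rE_s^*A_t\rangle=tr\big((A_iE_j^*A_h)^t\,A_rE_s^*A_t\big)=\sum_{u,v\in X}(A_iE_j^*A_h)_{uv}\,(A_rE_s^*A_t)_{uv}$. First I would compute the two families of entries. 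Writing $[\,P\,]$ for the quantity that is $1$ when the statement $P$ holds and $0$ otherwise, the fact that $(E_j^*)_{ww}=[\,\partial(x,w)=j\,]$ gives $(A_iE_j^*A_h)_{uv}=\sum_{w\in X}[\,\partial(x,w)=j\,]\,(A_i)_{uw}(A_h)_{wv}$, which counts the vertices $w$ with $\partial(x,w)=j$, $\partial(u,w)=i$, and $\partial(w,v)=h$; similarly $(A_rE_s^*A_t)_{uv}$ counts the vertices $w'$ with $\partial(x,w')=s$, $\partial(u,w')=r$, and $\partial(w',v)=t$.

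Substituting these counts turns the inner product into a sum over quadruples $(u,v,w,w')\in X^4$ of a product of six distance indicators. The decisive step is to reorganize this sum by summing over $u$ and $v$ last: fix $w$ and $w'$ and put $\ell=\partial(w,w')$. The sum over $u$ with $\partial(u,w)=i$ and $\partial(u,w')=r$ equals $|\Gamma_i(w)\cap\Gamma_r(w')|=p^\ell_{ir}$, while the sum over $v$ with $\partial(w,v)=h$ and $\partial(w',v)=t$ equals $|\Gamma_h(w)\cap\Gamma_t(w')|=p^\ell_{ht}$. This leaves $\langle A_iE_j^*A_h,A_rE_s^*A_t\rangle=\sum_{w,w'}[\,\partial(x,w)=j\,][\,\partial(x,w')=s\,]\,p^{\partial(w,w')}_{ir}\,p^{\partial(w,w')}_{ht}$.

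Finally I would group this remaining sum according to $\ell=\partial(w,w')$. For fixed $\ell$ the number of pairs $(w,w')$ with $\partial(x,w)=j$, $\partial(x,w')=s$, and $\partial(w,w')=\ell$ is $k_j\,p^j_{\ell s}$, since there are $k_j$ choices of $w\in\Gamma_j(x)$ and, for each, $|\Gamma_\ell(w)\cap\Gamma_s(x)|=p^j_{\ell s}$ admissible $w'$. Thus $\langle A_iE_j^*A_h,A_rE_s^*A_t\rangle=\sum_{\ell=0}^D k_j\,p^j_{\ell s}\,p^\ell_{ir}\,p^\ell_{ht}$, and the proof concludes by rewriting $k_j\,p^j_{\ell s}=k_\ell\,p^\ell_{js}$ using the three-way symmetry (\ref{k_hp^h_{ij}}), which yields $\sum_{\ell=0}^D k_\ell\,p^\ell_{ir}\,p^\ell_{js}\,p^\ell_{ht}$ as required. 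I expect the main obstacle to be bookkeeping rather than any real difficulty: one must match each partial summation to the correct intersection number, keeping track of the order of the lower indices and of the identity $p^\ell_{ij}=p^\ell_{ji}$ from (\ref{p^h_{ij}=p^h_{ji}}), and then invoke exactly the right instance of (\ref{k_hp^h_{ij}}) at the end. The conceptual content is simply that, once $\ell=\partial(w,w')$ is fixed, the three index pairs $(i,r)$, $(j,s)$, $(h,t)$ decouple across the three reference vertices $u$, $x$, $v$.
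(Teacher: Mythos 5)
Your argument is correct, but it proceeds by a genuinely different route from the paper. You expand $\langle A_iE_j^*A_h,A_rE_s^*A_t\rangle$ entrywise as a count of quadruples $(u,v,w,w')$ subject to six distance conditions, eliminate $u$ and $v$ first to produce $p^\ell_{ir}$ and $p^\ell_{ht}$ with $\ell=\partial(w,w')$, and then count the pairs $(w,w')$ to get $k_jp^j_{\ell s}$, which (\ref{k_hp^h_{ij}}) converts to $k_\ell p^\ell_{js}$. The paper instead works algebraically: it uses the cyclic invariance of the trace together with $A_iA_r=\sum_\ell p^\ell_{ir}A_\ell$ and $A_tA_h=\sum_w p^w_{ht}A_w$ to reduce the inner product to $\sum_{\ell,w}p^\ell_{ir}p^w_{ht}\langle E_s^*A_\ell E_j^*,E_s^*A_wE_j^*\rangle$, then invokes Lemma \ref{formula}(i) to collapse the double sum via $\delta_{\ell w}$ and finishes with the same identity (\ref{k_hp^h_{ij}}). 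Your version is more elementary and self-contained, since it needs no prior orthogonality lemma and makes the combinatorial meaning of each factor transparent; the paper's version is shorter on the page because it delegates the decoupling of the three index pairs to the already-established Lemma \ref{formula}. Both arguments hinge on the same final step, namely rewriting $k_jp^j_{\ell s}$ as $k_\ell p^\ell_{js}$, and your bookkeeping of which instance of (\ref{k_hp^h_{ij}}) to apply is accurate.
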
	
\begin{proof}
	Since $A_iA_j=\sum_{h=0}^{D} p^h_{ij}A_h$ and $E_iE_j=\delta_{ij}E_i$	$(0\leq h, i, j \leq D)$ and by Lemma \ref{formula} and (\ref{k_hp^h_{ij}}), we obtain 
		\begin{align*} 
	\langle A_iE_j^*A_h,A_rE_s^*A_t\rangle  
	& = tr((A_iE_j^*A_h)^t(\overline{A_rE_s^*A_t})) \\
	& = tr(A_hE_j^*A_iA_rE_s^*A_t) 	\\
	& = \displaystyle\sum_{\ell=0}^{D} p^\ell_{ir}tr(A_hE_j^*A_\ell E_s^*A_t) \\
	& = \displaystyle\sum_{\ell=0}^{D} p^\ell_{ir}tr(E_j^*A_\ell E_s^*A_tA_h) \\
	& = \displaystyle\sum_{\ell=0}^{D}\displaystyle\sum_{w=0}^{D} p^\ell_{ir}p^w_{ht}tr(E_j^*A_\ell E_s^*A_w) \\
	& = \displaystyle\sum_{\ell=0}^{D}\displaystyle\sum_{w=0}^{D} p^\ell_{ir}p^w_{ht}tr(E_j^*E_j^*A_\ell E_s^*E_s^*A_w) \\
	& = \displaystyle\sum_{\ell=0}^{D}\displaystyle\sum_{w=0}^{D} p^\ell_{ir}p^w_{ht}tr(E_j^*A_\ell E_s^*E_s^*A_wE_j^*) \\
	& = \displaystyle\sum_{\ell=0}^{D}\displaystyle\sum_{w=0}^{D} p^\ell_{ir}p^w_{ht}tr((E_s^*A_\ell E_j^*)^t(\overline{E_s^*A_wE_j^*})) \\
	& = \displaystyle\sum_{\ell=0}^{D}\displaystyle\sum_{w=0}^{D} p^\ell_{ir}p^w_{ht}\langle E_s^*A_\ell E_j^*,E_s^*A_wE_j^*\rangle  \\
	& = \displaystyle\sum_{\ell=0}^{D}\displaystyle\sum_{w=0}^{D} \delta_{\ell w}p^\ell_{ir}p^w_{ht}p^j_{s\ell}k_j \\
	& = \displaystyle\sum_{\ell=0}^{D} k_\ell p^\ell_{ir}p^\ell_{js}p^\ell_{ht}. \qedhere
	\end{align*}
\end{proof}

\begin{definition} \label{G}
	Let $G$ denote the matrix of inner products for	
	$$A_2E_i^*A-AE_i^*A_2,A_3E_i^*-E_i^*A_3, A_2E_i^*-E_i^*A_2, AE_i^*-E_i^*A,$$
	where $1\leq i \leq D$. 
	Thus the matrix $G$ is $4D\times 4D$.
\end{definition}

\begin{theorem} \label{entries_G}
	The entries of $G$ are as follows:
	For $1\leq i,j \leq D$,
	\begin{table}[h!]
		\begin{adjustbox}{width=1\textwidth}
			\small
			\centering
			\begin{tabular}{ c|c c c c } 
				$\langle \;,\;\rangle $ & $A_2E_j^*A-AE_j^*A_2$ & $A_3E_j^*-E_j^*A_3$ & $A_2E_j^*-E_j^*A_2$ & $AE_j^*-E_j^*A$ \\ [.4em]
				\hline \\[-.8em]
				$A_2E_i^*A-AE_i^*A_2$ 
				& $\phi$ 
				& $2k_2b_2(p^1_{ij}-p^2_{ij})$ 
				& $2k_2a_2(p^1_{ij}-p^2_{ij})$ 
				& $2k_2c_2(p^1_{ij}-p^2_{ij})$ \\ [.4em]
				$A_3E_i^*-E_i^*A_3$ 
				& $2k_2b_2(p^1_{ij}-p^2_{ij})$ 
				& $2k_3(\delta_{ij}k_i-p^3_{ij})$ 
				& $0$ 
				& $0$ \\ [.4em]
				$A_2E_i^*-E_i^*A_2$ 
				& $2k_2a_2(p^1_{ij}-p^2_{ij})$ 
				& $0$ 
				& $2k_2(\delta_{ij}k_i-p^2_{ij})$ 
				& $0$ \\ [.4em]
				$AE_i^*-E_i^*A$ 
				& $2k_2c_2(p^1_{ij}-p^2_{ij})$ 
				& $0$ 
				& $0$ 
				& $2k(\delta_{ij}k_i-p^1_{ij})$ 
			\end{tabular}
		\end{adjustbox}
	\end{table} 

\noindent where $\phi/2$ is a weighted sum involving the following terms and coefficients:

\begin{table}[h!]
		\centering
		\begin{tabular}{ c|c } 
			term & coefficient  \\ [.4em]
			\hline \\[-.8em]
			$p^0_{ij}$ 
			& $kk_2$  \\ [.4em]
			$p^1_{ij}$ 
			& $k_2a_1a_2-kb_1^2$ \\ [.4em]
			$p^2_{ij}$ 
			& $k_2(c_2(b_1-1)-a_2(a_1+1) +b_2(c_3-1))$ \\ [.4em]
			$p^3_{ij}$
			&  $-k_3c_3^2$ 
		\end{tabular}
\end{table} 
%
	\end{theorem}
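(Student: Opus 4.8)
The plan is to evaluate all sixteen blocks of $G$ directly from Lemma~\ref{<>formula}. The first step is to rewrite every generator in the form $A_aE_i^*A_b$ so that the lemma applies. Since $A_0=I$, we have $A_3E_i^*-E_i^*A_3=A_3E_i^*A_0-A_0E_i^*A_3$, $A_2E_i^*-E_i^*A_2=A_2E_i^*A_0-A_0E_i^*A_2$, $AE_i^*-E_i^*A=A_1E_i^*A_0-A_0E_i^*A_1$, and $A_2E_i^*A-AE_i^*A_2=A_2E_i^*A_1-A_1E_i^*A_2$; thus each generator is a difference $A_aE_i^*A_b-A_bE_i^*A_a$ with $(a,b)\in\{(2,1),(3,0),(2,0),(1,0)\}$.

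For a given block I expand the inner product bilinearly into four terms and evaluate each by Lemma~\ref{<>formula}, which gives $\langle A_aE_i^*A_b,A_cE_j^*A_d\rangle=\sum_{\ell=0}^{D}k_\ell\,p^\ell_{ac}\,p^\ell_{ij}\,p^\ell_{bd}$. The key point is that the ``front'' pairing $p^\ell_{ac}$ and the ``back'' pairing $p^\ell_{bd}$ usually pin $\ell$ to a single value, because by (\ref{p^h_{0j}}) any $A_0$ factor forces $\ell$ to the distance it is paired with, and by (\ref{p^0_{ij}}) we have $p^0_{ij}=\delta_{ij}k_i$. The three off-diagonal blocks among the generators $(3,0),(2,0),(1,0)$ vanish for exactly this reason: the front and back constraints on $\ell$ are incompatible (for instance $\ell=3$ from the front and $\ell=0$ from the back), so every term is zero. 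In the three off-diagonal blocks pairing generator $(2,1)$ with the generator built from $A_r$ (where $r\in\{1,2,3\}$), exactly two terms survive, with coefficients $k_1p^1_{2r}$ and $k_2p^2_{1r}$; these are equal by (\ref{k_hp^h_{ij}}), and $p^2_{1r}$ equals $c_2,a_2,b_2$ for $r=1,2,3$ by the definitions of $c_2,a_2,b_2$ and (\ref{p^h_{ij}=p^h_{ji}}). This produces the off-diagonal entries $2k_2c_2(p^1_{ij}-p^2_{ij})$, $2k_2a_2(p^1_{ij}-p^2_{ij})$, $2k_2b_2(p^1_{ij}-p^2_{ij})$. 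The three diagonal blocks of the generators $(3,0),(2,0),(1,0)$ collapse similarly to $2k_s(\delta_{ij}k_i-p^s_{ij})$ for $s=3,2,1$.

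The substantive computation is the $(1,1)$ entry $\phi=\langle A_2E_i^*A_1-A_1E_i^*A_2,\,A_2E_j^*A_1-A_1E_j^*A_2\rangle$. Expanding and applying Lemma~\ref{<>formula}, the two diagonal terms coincide and the two cross terms coincide (using $p^\ell_{21}=p^\ell_{12}$ from (\ref{p^h_{ij}=p^h_{ji}})), giving
\[
\phi/2=\sum_{\ell=0}^{D}k_\ell\bigl(p^\ell_{11}p^\ell_{22}-(p^\ell_{12})^2\bigr)p^\ell_{ij}.
\]
Since $p^\ell_{11}=0$ for $\ell>2$ and $p^\ell_{12}=0$ for $\ell>3$, only $\ell\in\{0,1,2,3\}$ survive, which is precisely the range in the coefficient table. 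The cases $\ell=0,1,3$ are immediate: $p^0_{11}=k$, $p^0_{22}=k_2$, $p^0_{12}=0$ give $kk_2$; $p^1_{11}=a_1$, $p^1_{12}=b_1$, together with $p^1_{22}=k_2a_2/k$ from (\ref{k_hp^h_{ij}}), give $k_2a_1a_2-kb_1^2$; and $p^3_{11}=0$, $p^3_{12}=c_3$ give $-k_3c_3^2$.

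The only quantity not read off directly from the definitions is $p^2_{22}$, entering the $\ell=2$ coefficient $k_2(c_2p^2_{22}-a_2^2)$, and this is where I expect the real (though still routine) work to lie. I would compute it from the three-term recursion $A_1A_i=c_{i+1}A_{i+1}+a_iA_i+b_{i-1}A_{i-1}$ (equivalently the recursion defining the $v_i$, since $v_i(A)=A_i$): writing $A_2=c_2^{-1}(A_1^2-a_1A_1-kI)$ and expanding $A_2^2$ by repeated application of the recursion, the coefficient of $A_2$ yields $c_2p^2_{22}=c_3b_2+a_2^2+b_1c_2-a_1a_2-k$. Substituting $k=c_2+a_2+b_2$ and regrouping turns $c_2p^2_{22}-a_2^2$ into $c_2(b_1-1)-a_2(a_1+1)+b_2(c_3-1)$, the stated $\ell=2$ coefficient. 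With $p^2_{22}$ in hand, collecting the four surviving terms of $\phi/2$ completes the table; all steps other than this extraction are bookkeeping with Lemma~\ref{<>formula} and the identities (\ref{p^h_{0j}})--(\ref{k_hp^h_{ij}}).
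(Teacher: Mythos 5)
Your proposal is correct and follows essentially the same route as the paper: both rewrite each generator as $A_aE_i^*A_b-A_bE_i^*A_a$ with $A_0=I$, expand each block bilinearly, evaluate via Lemma \ref{<>formula}, and reduce using the identities (\ref{p^h_{0j}})--(\ref{k_hp^h_{ij}}), with the only nontrivial input being $p^2_{22}$ extracted from the recursion (\ref{formula ai,bi,ci}) (the paper records this value in Appendix 1). Your intermediate expression $c_2p^2_{22}-a_2^2=c_2(b_1-1)-a_2(a_1+1)+b_2(c_3-1)$ and all block evaluations check out against the paper's computation.
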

\begin{proof} By Lemma \ref{<>formula} and using (\ref{p^h_{0j}})--(\ref{formula ai,bi,ci}), we obtain 
	\begin{align} \label{A_2E_i^*A-AE_i^*A_2,A_2E_j^*A-AE_j^*A_2}
	&\langle A_2E_i^*A-AE_i^*A_2,A_2E_j^*A-AE_j^*A_2\rangle   \nonumber\\
	&\quad =\langle A_2E_i^*A,A_2E_j^*A\rangle -\langle A_2E_i^*A,AE_j^*A_2\rangle  -\langle AE_i^*A_2,A_2E_j^*A\rangle +\langle AE_i^*A_2,AE_j^*A_2\rangle  \nonumber\\
	&\quad =\displaystyle\sum_{\alpha=0}^{D} k_{\alpha}p^{\alpha}_{22}p^{\alpha}_{ij}p^{\alpha}_{11}
	-\displaystyle\sum_{\beta=0}^{D} k_{\beta}p^{\beta}_{21}p^{\beta}_{ij}p^{\beta}_{12} 
	-\displaystyle\sum_{\gamma=0}^{D} k_{\gamma}p^{\gamma}_{12}p^{\gamma}_{ij}p^{\gamma}_{21}
	+\displaystyle\sum_{\eta=0}^{D} k_{\eta}p^{\eta}_{11}p^{\eta}_{ij}p^{\eta}_{22}  \nonumber\\
	&\quad = 2\bigg(\displaystyle\sum_{\alpha=0}^{2} k_{\alpha}p^{\alpha}_{22}p^{\alpha}_{ij}p^{\alpha}_{11} 
	-\displaystyle\sum_{\beta=1}^{3} k_{\beta}(p^{\beta}_{12})^2p^{\beta}_{ij}\bigg) \nonumber\\
	&\quad = 2(k_0p^0_{22}p^0_{ij}p^0_{11} + k_1p^1_{22}p^1_{ij}p^1_{11} + k_2p^2_{22}p^2_{ij}p^2_{11}
	-k_1(p^1_{12})^2p^1_{ij}-k_2(p^2_{12})^2p^2_{ij}-k_3(p^3_{12})^2p^3_{ij}) \nonumber\\
	&\quad =2(kk_2p^0_{ij}+(k_2a_1a_2-kb_1^2)p^1_{ij} +k_2(c_2(b_1-1)-a_2(a_1+1)+b_2(c_3-1))p^2_{ij} \nonumber\\ 
	&\qquad -k_3c_3^2p^3_{ij}).
	\end{align}

	\noindent Similarly, for $1 \leq h \leq 3$,	
	\begin{align} \label{A_hE_i^*-E_i^*A_h,A_2E_j^*A-AE_j^*A_2}
	&\langle A_hE_i^*-E_i^*A_h,A_2E_j^*A-AE_j^*A_2\rangle \nonumber\\
	&\quad =\langle A_hE_i^*,A_2E_j^*A\rangle -\langle A_hE_i^*,AE_j^*A_2\rangle  
	 -\langle E_i^*A_h,A_2E_j^*A\rangle +\langle E_i^*A_h,AE_j^*A_2\rangle  \nonumber\\
	&\quad =\langle A_hE_i^*A_0,A_2E_j^*A\rangle -\langle A_hE_i^*A_0,AE_j^*A_2\rangle  
	-\langle A_0E_i^*A_h,A_2E_j^*A\rangle  \nonumber\\
	&\qquad +\langle A_0E_i^*A_h,AE_j^*A_2\rangle  \nonumber\\
	&\quad =\displaystyle\sum_{\alpha=0}^{D} k_{\alpha}p^{\alpha}_{h2}p^{\alpha}_{ij}p^{\alpha}_{01}
	-\displaystyle\sum_{\beta=0}^{D} k_{\beta}p^{\beta}_{h1}p^{\beta}_{ij}p^{\beta}_{02}
	-\displaystyle\sum_{\gamma=0}^{D} k_{\gamma}p^{\gamma}_{02}p^{\gamma}_{ij}p^{\gamma}_{h1}
	+\displaystyle\sum_{\eta=0}^{D} k_{\eta}p^{\eta}_{01}p^{\eta}_{ij}p^{\eta}_{h2}  \nonumber\\
	&\quad = 2(kp^1_{h2}p^1_{ij}-k_2p^2_{h1}p^2_{ij})  \nonumber\\
	&\quad = 2(k_2p^2_{1h}p^1_{ij}-k_2p^2_{1h}p^2_{ij})  \nonumber\\
	&\quad =2k_2p^2_{1h}(p^1_{ij}-p^2_{ij}). 
	\end{align}

	\noindent Similarly, for $1 \leq h,\ell\leq 3$,		
	\begin{align}  \label{A_hE_i^*-E_i^*A_h,A_lE_j^*-E_j^*A_l}
	&\langle A_hE_i^*-E_i^*A_h,A_\ell E_j^*-E_j^*A_\ell\rangle  \nonumber\\
	&\qquad=\langle A_hE_i^*,A_\ell E_j^*\rangle -\langle A_hE_i^*,E_j^*A_\ell\rangle 
	-\langle E_i^*A_h,A_\ell E_j^*\rangle  
	 +\langle E_i^*A_h,E_j^*A_\ell\rangle  \nonumber\\
	&\qquad =\langle A_hE_i^*A_0,A_\ell E_j^*A_0\rangle -\langle A_hE_i^*A_0,A_0E_j^*A_\ell\rangle  
	-\langle A_0E_i^*A_h,A_\ell E_j^*A_0\rangle  \nonumber\\
	&\quad \qquad +\langle A_0E_i^*A_h,A_0E_j^*A_\ell\rangle  \nonumber\\
	&\qquad =\displaystyle\sum_{\alpha=0}^{D} k_{\alpha}p^{\alpha}_{h\ell}p^{\alpha}_{ij}p^{\alpha}_{00}
	-\displaystyle\sum_{\beta=0}^{D} k_{\beta}p^{\beta}_{h0}p^{\beta}_{ij}p^{\beta}_{0\ell}
	-\displaystyle\sum_{\gamma=0}^{D} k_{\gamma}p^{\gamma}_{0\ell}p^{\gamma}_{ij}p^{\gamma}_{h0} 
	+\displaystyle\sum_{\eta=0}^{D} k_{\eta}p^{\eta}_{00}p^{\eta}_{ij}p^{\eta}_{h\ell}  \nonumber\\	
	&\qquad =2(k_0p^0_{h\ell}p^0_{ij}-\delta_{h\ell}k_hp^h_{ij})  \nonumber\\
	&\qquad =2(\delta_{h\ell}\delta_{ij}k_hk_i-\delta_{h\ell}k_hp^h_{ij}) \nonumber \\
	&\qquad =2\delta_{h\ell}k_h(\delta_{ij}k_i-p^h_{ij}).
	\end{align}
The result follows.  
\end{proof}

In Appendix 2, we give the matrix $G$ for $D=3$.

\begin{definition} \label{B_i}
	For $1\leq i \leq D$ let $B_i$ denote the matrix of inner products for 
	$$A_2A_i^*A-AA_i^*A_2,A_3A_i^*-A_i^*A_3, A_2A_i^*-A_i^*A_2, AA_i^*-A_i^*A.$$
	So the matrix $B_i$ is $4\times 4$.
\end{definition}

\begin{definition} \label{widetildeG}
	Let $\widetilde{G}$ denote the matrix of inner products for	
	$$A_2A_i^*A-AA_i^*A_2,A_3A_i^*-A_i^*A_3, A_2A_i^*-A_i^*A_2, AA_i^*-A_i^*A,$$
	where $1\leq i \leq D$.
	Thus the matrix $\widetilde{G}$ is $4D\times 4D$.
\end{definition}

\begin{lemma} \label{form_tildeG}
	The matrix $\widetilde{G}$ has the form
	\[
	\widetilde{G}=\begin{bmatrix}
	B_1    &        &        & \bf{0}  \\
	& B_2    &        &   \\
	&        & \ddots &   \\
	\bf{0}      &        &        & B_D 
	\end{bmatrix},
	\]
where $B_1, B_2, ..., B_D$ are from Definition $\rm{\ref{B_i}}$.
\end{lemma}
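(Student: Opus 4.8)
The plan is to prove the stronger orthogonality statement that drives everything: for $i\neq j$, the inner product of \emph{any} element carrying the index $i$ against \emph{any} element carrying the index $j$ vanishes. Granting this, if we list the $4D$ generators so that the four elements sharing a common index $i$ are consecutive (in the order of Definition \ref{B_i}), then $\widetilde{G}$ becomes block diagonal, and its $i$-th diagonal block is precisely the Gram matrix $B_i$. Every generator has the shape $PA_i^*Q-QA_i^*P$ with $P,Q\in\{I,A,A_2,A_3\}\subseteq M$, so by bilinearity of $\langle\;,\;\rangle$ it suffices to show that $\langle PA_i^*Q,\,RA_j^*S\rangle=0$ whenever $i\neq j$ and $P,Q,R,S\in M$.

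First I would reduce this inner product to a trace. Since $\langle R,S\rangle=tr(R^t\overline{S})$ and every matrix in sight is real and symmetric, $\langle PA_i^*Q,RA_j^*S\rangle=tr(QA_i^*PRA_j^*S)=tr\big(A_i^*(PR)A_j^*(SQ)\big)$. Because $M$ is closed under multiplication and spanned by $\{A_m\}_{m=0}^D$, I write $PR=\sum_m\alpha_mA_m$ and $SQ=\sum_n\beta_nA_n$, so the problem collapses to evaluating $tr(A_i^*A_mA_j^*A_n)$ for $0\le m,n\le D$.

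Next I would evaluate this elementary trace. The matrices $A_i^*,A_j^*$ are diagonal with $(A_i^*)_{yy}=|X|(E_i)_{xy}$ by (\ref{A_i^*}), so $tr(A_i^*A_mA_j^*A_n)=|X|^2\sum_{y,z}(E_i)_{xy}(A_m)_{yz}(A_n)_{zy}(E_j)_{xz}$. The factor $(A_m)_{yz}(A_n)_{zy}=(A_m)_{yz}(A_n)_{yz}$ equals $\delta_{mn}(A_m)_{yz}$ since $A_m\circ A_n=\delta_{mn}A_m$, and the remaining sum is $(E_iA_mE_j)_{xx}$. Expanding $A_m=\sum_\ell v_m(\theta_\ell)E_\ell$ from (\ref{A_j}) and using $E_iE_j=\delta_{ij}E_i$ gives $E_iA_mE_j=\delta_{ij}v_m(\theta_i)E_i$. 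Hence $tr(A_i^*A_mA_j^*A_n)=\delta_{mn}\delta_{ij}|X|^2v_m(\theta_i)(E_i)_{xx}$, which is $0$ for $i\neq j$. Substituting back, $\langle PA_i^*Q,RA_j^*S\rangle=0$ for $i\neq j$, and the block-diagonal form follows.

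The routine steps are the transpose/conjugate bookkeeping and the expansions of $PR,SQ$; these are mechanical. The one place to watch is the collapse of the double sum over $m,n$: the support identity $A_m\circ A_n=\delta_{mn}A_m$ is what forces $m=n$, and only then does the idempotent orthogonality $E_iE_j=\delta_{ij}E_i$ deliver the vanishing. Conceptually the whole lemma rests on the single identity $E_iA_mE_j=\delta_{ij}v_m(\theta_i)E_i$, i.e.\ that a Bose--Mesner element cannot connect two distinct primitive idempotents; everything else is formal, and note that the diagonal blocks require no computation since they are $B_i$ by definition.
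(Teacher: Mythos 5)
Your proof is correct, and the overall skeleton matches the paper's: block-diagonality of $\widetilde{G}$ reduces to the mutual orthogonality of the subspaces $MA_i^*M$ for distinct $i$, and the diagonal blocks are the $B_i$ by definition. Where you differ is in how that orthogonality is obtained. The paper dispatches it in one line by citing Lemma \ref{formula}(ii), whose $\delta_{js}$ factor immediately gives $\langle E_rA_i^*E_h,\,E_{r'}A_j^*E_{h'}\rangle=0$ for $i\neq j$, hence $MA_i^*M\perp MA_j^*M$ since the $E_r$ span $M$. You instead prove the orthogonality from scratch: you work in the $\{A_m\}$ basis of $M$, reduce everything by cyclicity of the trace to $tr(A_i^*A_mA_j^*A_n)$, and evaluate that entrywise using the disjoint-support identity $A_m\circ A_n=\delta_{mn}A_m$ together with $E_iA_mE_j=\delta_{ij}v_m(\theta_i)E_i$. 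The computation is right (the symmetry of $A_n$ and of $E_j$ is what lets the double sum collapse to $(E_iA_mE_j)_{xx}$), and it is essentially a self-contained re-derivation of the special case of Lemma \ref{formula}(ii) that the lemma actually needs. What your route buys is independence from the cited result; what the paper's route buys is brevity and the conceptual framing that the direct sum $MM^*M=\sum_i MA_i^*M$ is an orthogonal decomposition. Either way the lemma follows.
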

\begin{proof}
	Recall that $A_0^*,A_1^*,...,A_D^*$ form a basis for $M^*$.
	Therefore the sum $MM^*M=\sum_{i=0}^{D}MA_i^*M$ is direct.
	The summands are mutually orthogonal by Lemma \ref{formula}(ii).
	The result follows.
\end{proof}

\begin{lemma} \label{det_tildeG}
	$det(\widetilde{G})=\displaystyle\prod_{i=1}^{D} det(B_i)$.
\end{lemma}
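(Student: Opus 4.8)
The plan is to read the identity directly off the block structure already established in Lemma \ref{form_tildeG}. That lemma shows that, after the four generators $A_2A_i^*A-AA_i^*A_2$, $A_3A_i^*-A_i^*A_3$, $A_2A_i^*-A_i^*A_2$, $AA_i^*-A_i^*A$ attached to a common index $i$ are listed consecutively, the Gram matrix $\widetilde{G}$ is block diagonal with the $4\times 4$ diagonal blocks $B_1, B_2, \ldots, B_D$ of Definition \ref{B_i}. All the content of the present lemma is therefore contained in the orthogonality of the summands $MA_i^*M$ that drives Lemma \ref{form_tildeG}; what remains is purely formal.

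First I would observe that the specific ordering of the $4D$ generators used in Definition \ref{widetildeG} and the block-diagonal ordering of Lemma \ref{form_tildeG} differ by a permutation of the generating set. Reordering the generators replaces $\widetilde{G}$ by $P^{t}\widetilde{G}P$ for some permutation matrix $P$, and since $P^{t}=P^{-1}$ we have $\det(P^{t}\widetilde{G}P)=\det(\widetilde{G})$. Hence the determinant may be computed in the ordering for which $\widetilde{G}$ has the block-diagonal form displayed in Lemma \ref{form_tildeG}.

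Then I would invoke the standard fact that the determinant of a block-diagonal matrix equals the product of the determinants of its diagonal blocks, which yields
\[
\det(\widetilde{G})=\prod_{i=1}^{D}\det(B_i).
\]
I do not expect any genuine obstacle: once the vanishing of the off-diagonal blocks is granted from Lemma \ref{form_tildeG} (a consequence of the directness and mutual orthogonality of the decomposition $MM^*M=\sum_{i=0}^{D}MA_i^*M$), the statement is just multiplicativity of the determinant over a direct sum of orthogonal subspaces. The only point worth stating explicitly is the permutation argument above, so that the reader is not troubled by the discrepancy between the listing order in Definition \ref{widetildeG} and the grouped-by-$i$ order in Lemma \ref{form_tildeG}.
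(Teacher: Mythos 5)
Your proposal is correct and matches the paper's proof, which simply reads the identity off the block-diagonal form established in Lemma \ref{form_tildeG} and invokes multiplicativity of the determinant over diagonal blocks. Your extra remark that reordering the $4D$ generators only conjugates $\widetilde{G}$ by a permutation matrix (hence leaves $\det(\widetilde{G})$ unchanged) is a sensible clarification the paper leaves implicit, but it does not change the argument.
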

\begin{proof}
	Immediate from Lemma \ref{form_tildeG}.
\end{proof}


\section{The main result}

In this section we obtain our main result, which is Theorem \ref{primitive}.

\begin{lemma} \label{det}
	The following (i)--(iii) hold.
	\begin{enumerate}[(i)]
		\item $\widetilde{G}=(S')^tGS'$.
		\item $det(G)=\big(det(S')\big)^{-2}det(\widetilde{G})$.
		\item $det(G)=\big(det(S^{alt})\big)^{-8}\displaystyle\prod_{i=1}^{D} det(B_i)$.
	\end{enumerate}
\end{lemma}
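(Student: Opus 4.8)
The plan is to prove (i)--(iii) in order, obtaining (i) from the change-of-basis behaviour of Gram matrices and then reducing (ii) and (iii) to routine determinant bookkeeping using the structural lemmas already proved. The only substantive step is (i); everything else follows by taking determinants and substituting.

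For (i), I would start from Corollary \ref{matrixS'}, which states that $S'$ is the transition matrix carrying the ordered list $A_2E_i^*A-AE_i^*A_2$, $A_3E_i^*-E_i^*A_3$, $A_2E_i^*-E_i^*A_2$, $AE_i^*-E_i^*A$ $(1\le i\le D)$ to the corresponding list with each $E_i^*$ replaced by $A_i^*$. Writing $v_1,\dots,v_{4D}$ for the first list and $w_1,\dots,w_{4D}$ for the second, this reads $w_j=\sum_k v_k (S')_{kj}$. By Definitions \ref{G} and \ref{widetildeG}, $G$ and $\widetilde{G}$ are the Gram matrices $G_{k\ell}=\langle v_k,v_\ell\rangle$ and $\widetilde{G}_{ij}=\langle w_i,w_j\rangle$. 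Expanding $\widetilde{G}_{ij}$ using that $\langle\;,\;\rangle$ is linear in its first argument and conjugate-linear in its second, I obtain $\widetilde{G}_{ij}=\sum_{k,\ell}(S')_{ki}\,G_{k\ell}\,\overline{(S')_{\ell j}}$, i.e. $\widetilde{G}=(S')^tG\,\overline{S'}$.

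It then remains to drop the conjugate, i.e. to verify that $S'$ is real. Here I would note that $S'$ is block-diagonal with diagonal blocks $S^{alt}$, and by (\ref{S_alt}) the entries of $S^{alt}$ are differences of entries of $S$. By Lemma \ref{entries of S}, $S_{ij}=|X|^{-1}m_ju_i(\theta_j)$, where $|X|$ and the multiplicities $m_j$ are real, the eigenvalues $\theta_j$ are real (being eigenvalues of the real symmetric matrix $A$), and the polynomials $u_i$ have real coefficients. Hence $S$, and therefore $S^{alt}$ and $S'$, are real, so $\overline{S'}=S'$ and $\widetilde{G}=(S')^tGS'$, giving (i). For (ii), I would take determinants in (i) to get $det(\widetilde{G})=\big(det(S')\big)^2 det(G)$; since $S'$ is invertible by Lemma \ref{det_S'}, I may solve for $det(G)=\big(det(S')\big)^{-2}det(\widetilde{G})$. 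For (iii), I would substitute $det(S')=\big(det(S^{alt})\big)^4$ from Lemma \ref{det_S'} and $det(\widetilde{G})=\prod_{i=1}^D det(B_i)$ from Lemma \ref{det_tildeG} into the formula from (ii), yielding $det(G)=\big(det(S^{alt})\big)^{-8}\prod_{i=1}^D det(B_i)$.

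The one point requiring genuine care is the Hermitean character of $\langle\;,\;\rangle$: the general change-of-basis identity produces $(S')^tG\,\overline{S'}$, so the statement $\widetilde{G}=(S')^tGS'$ is correct only because $S'$ is real. Establishing that reality (equivalently, that every $S_{ij}$ is real) is what makes (i) go through; once it is in hand, parts (ii) and (iii) are immediate from the earlier determinant lemmas.
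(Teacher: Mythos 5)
Your proof is correct and takes essentially the same route as the paper, whose own argument for (i) is simply ``immediate from Definition \ref{G}, Definition \ref{widetildeG}, and Corollary \ref{matrixS'}'' followed by taking determinants and invoking Lemmas \ref{det_S'} and \ref{det_tildeG}. Your explicit check that $S'$ is real, so that the Hermitean change-of-basis formula $(S')^tG\overline{S'}$ collapses to $(S')^tGS'$, is a detail the paper leaves implicit but is exactly the right point to verify.
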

\begin{proof}
	$(i)$ Immediate from Definition \ref{G}, Definition \ref{widetildeG}, and Corollary \ref{matrixS'}.
	
	\noindent $(ii)$ Follows from $(i)$.
	
	\noindent $(iii)$ Follows from $(ii)$ and Lemmas \ref{det_S'}, \ref{det_tildeG}.
\end{proof}

\begin{theorem} \label{primitive} Let $\Gamma$ denote a primitive distance-regular graph with diameter $D\geq3$. 
Then  $\Gamma$ is $Q$-polynomial if and only if $det(G)=0$.
\end{theorem}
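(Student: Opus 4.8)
The plan is to reduce the determinant condition on the $4D\times 4D$ matrix $G$ to a statement about each of the $D$ diagonal blocks $B_i$, and then to identify the degeneracy of $B_i$ with the $Q$-polynomial property relative to the idempotent $E_i$. First I would invoke Lemma \ref{det}(iii), which gives $\det(G)=(\det(S^{alt}))^{-8}\prod_{i=1}^{D}\det(B_i)$. Since $S^{alt}$ is invertible by Lemma \ref{transS^alt}(v), the scalar $(\det S^{alt})^{-8}$ is nonzero, so $\det(G)=0$ if and only if $\det(B_i)=0$ for some $i$ with $1\le i\le D$. Because $B_i$ is, by Definition \ref{B_i}, the Gram matrix of the four matrices $A_2A_i^*A-AA_i^*A_2$, $A_3A_i^*-A_i^*A_3$, $A_2A_i^*-A_i^*A_2$, $AA_i^*-A_i^*A$ with respect to the positive-definite form $\langle\,,\,\rangle$, each $B_i$ is positive semidefinite and $\det(B_i)=0$ precisely when this quadruple is linearly dependent. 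Thus the theorem reduces to showing that, for some $i$, the quadruple is linearly dependent if and only if $\Gamma$ is $Q$-polynomial.

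Next I would make the dependence condition explicit. Using $A_h=\sum_{j}v_h(\theta_j)E_j$ from (\ref{A_j}), I expand each of the four matrices in the system $\{E_jA_i^*E_k\}$. By Lemma \ref{formula}(ii) these products are mutually orthogonal, and by Corollary \ref{p^h_ij,q^h_ij}(ii) we have $E_jA_i^*E_k\neq0$ exactly when $q^k_{ij}\neq0$, so the nonzero ones form an orthogonal basis of their span. A short computation shows that the coefficient of $E_jA_i^*E_k$ in each of the four matrices is an antisymmetric expression in $(\theta_j,\theta_k)$ carrying a factor $\theta_j-\theta_k$; dividing this out converts the linear dependence of the quadruple into the existence of a nonzero \emph{symmetric} polynomial $g$ of degree at most $2$ with $g(\theta_j,\theta_k)=0$ for every pair $(j,k)$, $j\neq k$, with $q^k_{ij}\neq0$. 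One checks (via the leading coefficients of $v_1,v_2,v_3$) that the assignment from the four coefficients to $g$ is an isomorphism onto the space of symmetric polynomials of degree at most $2$, so the quadruple is dependent if and only if such a $g$ exists.

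Finally I would match this conic condition with the $Q$-polynomial property with respect to $E_i$, noting that $A_i^*$ is the dual adjacency matrix attached to $E_i$. For the forward direction, a $Q$-polynomial ordering $F_0,F_1=E_i,\dots$ makes $(A,A_i^*)$ into a Leonard pair, whence the eigenvalues $\mu_m$ of $A$ on $F_mV$ satisfy a three-term recurrence $\mu_{m+1}+\mu_{m-1}=\beta\mu_m+\gamma$ with constant coefficients; a direct calculation then produces a symmetric conic through all consecutive pairs $(\mu_m,\mu_{m\pm1})$, which are exactly the points $(\theta_j,\theta_k)$ with $q^k_{ij}\neq0$, so $\det(B_i)=0$. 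For the converse, the existence of $g$ gives a tridiagonal (Askey--Wilson type) relation between $A$ and $A_i^*$, and I would argue, using primitivity through Lemma \ref{primitive_S_alt} together with the structure theory of Leonard pairs (or a cited characterization such as \cite[Theorem 3.3]{T_New}), that this forces the support of the Krein parameters $q^k_{ij}$ to be tridiagonal, i.e. that $\Gamma$ is $Q$-polynomial with respect to $E_i$. Combining the two directions with the elementary fact that $\Gamma$ is $Q$-polynomial if and only if it is $Q$-polynomial with respect to $E_i$ for some $i\in\{1,\dots,D\}$ completes the argument.

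The reductions in the first two paragraphs are essentially bookkeeping, and the forward implication follows cleanly from the Leonard-pair recurrence. The main obstacle is the converse in the third paragraph: showing that the existence of a single symmetric conic vanishing on the Krein support $\{(\theta_j,\theta_k):q^k_{ij}\neq0\}$ actually forces the full tridiagonal structure of the parameters, rather than merely constraining finitely many of them. This is where the genuine content of the $Q$-polynomial characterization lies, and it is the step I expect to require the deepest input from the structure theory.
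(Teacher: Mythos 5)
Your first paragraph reproduces the paper's reduction exactly: by Lemma \ref{det}(iii) and the invertibility of $S^{alt}$, $\det(G)=0$ if and only if $\det(B_t)=0$ for some $t$, i.e.\ if and only if the quadruple $AA_t^*A_2-A_2A_t^*A$, $A_t^*A_3-A_3A_t^*$, $A_t^*A_2-A_2A_t^*$, $A_t^*A-AA_t^*$ is linearly dependent for some $t$. Your forward direction also works, though it is heavier than necessary: the paper simply quotes \cite[Theorem 3.3]{T_New} together with Lemma \ref{primitive_S_alt} to place $A^*A_3-A_3A^*$ in the span of the other three commutators for $A^*=A_1^*$, which gives $\det(B_1)=0$ at once.

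The genuine gap is in the converse, and it is exactly the step you flag as ``the main obstacle'' without resolving. From $\det(B_t)=0$ you only know the quadruple is dependent, and this splits into two cases: either $A_t^*A_3-A_3A_t^*$ lies in the span of the other three (which is what \cite[Theorem 3.3]{T_New} requires), or the three matrices $AA_t^*A_2-A_2A_t^*A$, $A_t^*A_2-A_2A_t^*$, $A_t^*A-AA_t^*$ are already dependent among themselves. In your conic reformulation the second case is a nonzero $g$ coming from a combination with zero coefficient on $A_3A_t^*-A_t^*A_3$, and nothing in your sketch excludes it. The paper spends the bulk of its converse on precisely this exclusion: assuming $a(AA_t^*A_2-A_2A_t^*A)+b(A_t^*A_2-A_2A_t^*)+c(A_t^*A-AA_t^*)=0$, the $(x,z)$-entries at $\partial(x,z)=3,2,1$ force $b=c=0$ and $\theta_1^*=\theta_2^*$ (using $\theta_i^*\neq\theta_0^*$ from Lemma \ref{primitive_S_alt}), hence $AA_t^*A_2=A_2A_t^*A$; substituting $c_2A_2=A^2-a_1A-kI$ yields $[A,\,AA_t^*A+kA_t^*]=0$, so $\theta_i\theta_j+k=0$ whenever $i\neq j$ and $q^t_{ij}\neq0$; connectivity of the Krein diagram (this is where primitivity enters, via \cite[Proposition 2.11.1]{BCN} and Lemma \ref{primitive_S_alt}) then gives $\theta_t=-1$ and some $\theta_s=\theta_0=k$ with $s\neq0$, a contradiction. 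Without this argument your converse does not close; with it, most of your second and third paragraphs becomes unnecessary, since one can then cite \cite[Theorem 3.3]{T_New} directly instead of re-deriving it through Leonard-pair theory.
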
	
\begin{proof}
	To prove the theorem in one direction, assume that $\Gamma$ is $Q$-polynomial with respect to the ordering $E_0,E_1,...,E_D$.
	Write $A^*=A^*_1$.
	By \cite[Theorem 3.3]{T_New} and Lemma \ref{primitive_S_alt}, we obtain $A^*A_3-A_3A^*\in{Span\{AA^*A_2-A_2A^*A,A^*A_2-A_2A^*,A^*A-AA^*\}}$.
	Thus $AA^*A_2-A_2A^*A,A^*A_3-A_3A^*,A^*A_2-A_2A^*,A^*A-AA^*$ are linearly dependent.
	Now the matrix $B_1$ from  Definition \ref{B_i} has determinant zero.
	Now $det(G)=0$ by Lemma \ref{det}(iii).

%
%

	For the other direction, assume $det(G)=0$.
	By Lemma \ref{det}(iii) and since $S^{alt}$ is invertible, there exists an integer $t$ $(1\leq t\leq D)$ such that $det(B_t)=0$.
	Now $AA_t^*A_2-A_2A_t^*A,A_t^*A_3-A_3A_t^*,A_t^*A_2-A_2A_t^*,A_t^*A-AA_t^*$ are linearly dependent.
	We will show that $A_t^*A_3-A_3A_t^*\in{Span\{AA_t^*A_2-A_2A_t^*A,A_t^*A_2-A_2A_t^*,A_t^*A-AA_t^*\}}$.
	To do this we show that $AA_t^*A_2-A_2A_t^*A,A_t^*A_2-A_2A_t^*,A_t^*A-AA_t^*$  are linearly independent.
	Suppose not.
	Then there exist scalars $a,b,c$, not all zero, such that
	\begin{align}\label{lin dep}
	a(AA_t^*A_2-A_2A_t^*A)+b(A_t^*A_2-A_2A_t^*)+c(A_t^*A-AA_t^*)=0.
	\end{align}
	Abbreviate $\theta^*_i=m_tu_i(\theta_t)$ $(0\leq i \leq D)$.
	So $A^*_t=\sum_{i=0}^{D}\theta^*_iE^*_i$.
	By Lemma \ref{primitive_S_alt}, 
	\begin{align}\label{lem1}
	\qquad \qquad \qquad \qquad \qquad \theta^*_i\neq \theta^*_0 
	\qquad \qquad \qquad \qquad \qquad (1\leq i \leq D).
	\end{align}
	For $1\leq h \leq 3$ pick $z\in X$ such that $\partial(x,z)=h$.
	Compute the $(x,z)$-entry in (\ref{lin dep}).
	For $h=3$ we get $ac_3(\theta^*_1-\theta^*_2)=0$.
	For $h=2$ we get  $aa_2(\theta^*_1-\theta^*_2)+b(\theta^*_0-\theta^*_2)=0$.
	For $h=1$ we get $ab_1(\theta^*_1-\theta^*_2)+c(\theta^*_0-\theta^*_1)=0$.
	Solving these equations we obtain $a(\theta^*_1-\theta^*_2)=0$ and  $b=0$, $c=0$.
	Recall that $a, b, c$ are not all zero, so $a\neq 0$ and $\theta^*_1=\theta^*_2$.
	Now (\ref{lin dep}) becomes $AA_t^*A_2-A_2A_t^*A=0$.
	Recall $c_2A_2=A^2-a_1A-kI$. 
	We have $AA^*_tA^2+kA^*_tA=A^2A^*_tA+kAA^*_t$.
	Thus $[A,AA^*_tA+kA^*_t]=0$.
	For $0\leq i, j\leq D$ such that $i\neq j$ we have $E_iA^*_tE_j(\theta_i\theta_j+k)=0$.
	By Corollary \ref{p^h_ij,q^h_ij}, $E_iA^*_tE_j\neq 0$ if and only if $q^t_{ij}\neq 0$, and in this case $\theta_i\theta_j+k=0$.
	Since $q^t_{0t}=1$ and $\theta_0=k$, we have $k\theta_t+k=0$ and hence $\theta_t=-1$.
	Define a diagram with nodes $0,1,...,D$.
	There exists an arc between nodes $i,j$ if and only if $i\neq j$ and $q^t_{ij}\neq 0$.
	Observe that node $0$ is connected to node $t$ and no other nodes.
	By \cite [Proposition 2.11.1]{BCN} and Lemma \ref{primitive_S_alt}, the diagram is connected.
	Thus there exists a node $s$ with $s\neq 0$ and $s\neq t$ that is connected to node $t$ by an arc.
	In other words $q^t_{st}\neq 0$.
	So $\theta_s\theta_t+k=0$ and hence $\theta_s=k$, a contradiction.
	Therefore $AA_t^*A_2-A_2A_t^*A,A_t^*A_2-A_2A_t^*,A_t^*A-AA_t^*$  are linearly independent.
	So $A_t^*A_3-A_3A_t^*\in{Span\{AA_t^*A_2-A_2A_t^*A,A_t^*A_2-A_2A_t^*,A_t^*A-AA_t^*\}}$.
	Now by \cite[Theorem 3.3]{T_New} and (\ref{lem1}), $\Gamma$ is a $Q$-polynomial with respect to $E=E_t$.
\end{proof}


\section{Appendix 1}

Recall the distance-regular graph $\Gamma$ with diameter $D$.
Recall for $0\leq h \leq D$ 
\begin{table}[h!]
	\centering
	\begin{tabular}{ l l l l l } 
		$p^h_{1,h-1}=c_h$,
		&
		& $p^h_{1h}=a_h$,  
		&
		&$p^h_{1,h+1}=b_h$,  \\ [.4em]
		$p^1_{h,h-1}=\dfrac{k_hc_h}{k}$, 
		&
		& $p^1_{hh}=\dfrac{k_ha_h}{k}$,
		&
		&$p^1_{h,h+1}=\dfrac{k_hb_h}{k}$. 
	\end{tabular}
\end{table} 

\noindent We now give $p^h_{2j}$ for $h-2\leq j \leq h+2$.
\begin{flalign*}
\qquad \quad &p^h_{2,h-2}=\dfrac{c_{h-1}c_h}{c_2}, &&\\
&p^h_{2,h-1}=\dfrac{c_h(a_{h-1}+a_h-a_1)}{c_2}, \\
&p^h_{2h}= \dfrac{c_h(b_{h-1}-1)+a_h(a_h-a_1-1)+b_h(c_{h+1}-1)}{c_2}, \\
&p^h_{2,h+1}= \dfrac{b_h(a_{h+1}+a_h-a_1)}{c_2},\\
&p^h_{2,h+2}= \dfrac{b_hb_{h+1}}{c_2}.
\end{flalign*}

\noindent We now give $p^h_{3j}$ for $h-3\leq j \leq h+3$.
\begin{flalign*}
\qquad \quad
&p^h_{3,h-3}= \dfrac{c_{h-2}c_{h-1}c_h}{c_2c_3},&&\\
&p^h_{3,h-2}=\dfrac{(a_h-a_2)c_{h-1}c_h}{c_2c_3}+\dfrac{c_{h-1}c_h(a_{h-2}+a_{h-1}-a_1)}{c_2c_3},\\
&p^h_{3,h-1}=\dfrac{c_{h-1}c_h(b_{h-2}-1)+c_ha_{h-1}(a_{h-1}-a_1-1)+c_hb_{h-1}(c_h-1)}{c_2c_3}\\
& \qquad \qquad +\dfrac{c_h(a_h-a_2)(a_{h-1}+a_h-a_1)}{c_2c_3}+\dfrac{b_hc_hc_{h+1}}{c_2c_3}-\dfrac{b_1c_h}{c_3}, 
\end{flalign*}
\begin{flalign*}
\qquad \quad
&p^h_{3h}= \dfrac{c_hb_{h-1}(a_{h}+a_{h-1}-a_1)}{c_2c_3} &&\\
& \qquad \quad +\dfrac{(a_h-a_2)(c_h(b_{h-1}-1)+a_h(a_h-a_1-1)+b_h(c_{h+1}-1))}{c_2c_3}\\
& \qquad \quad +\dfrac{b_hc_{h+1}(a_h+a_{h+1}-a_1)}{c_2c_3}-\dfrac{b_1a_h}{c_3}, \\
&p^h_{3,h+1}= \dfrac{c_hb_{h-1}b_h}{c_2c_3} 
+\dfrac{b_h(a_h-a_2)(a_{h+1}+a_h-a_1)}{c_2c_3}\\
& \qquad \qquad  +\dfrac{b_h(c_{h+1}(b_h-1)+a_{h+1}(a_{h+1}-a_1-1)+b_{h+1}(c_{h+2}-1))}{c_2c_3}-\dfrac{b_1b_h}{c_3}, \\
&p^h_{3,h+2}= \dfrac{(a_h-a_2)b_hb_{h+1}}{c_2c_3} +\dfrac{b_hb_{h+1}(a_{h+2}+a_{h+1}-a_1)}{c_2c_3}, \\
&p^h_{3,h+3}= \dfrac{b_hb_{h+1}b_{h+2}}{c_2c_3}.
\end{flalign*}

\section{Appendix 2}

Recall the matrix $G$ from Theorem \ref{entries_G}.
In this appendix we  give $G$ for $D=3$.

\begin{example}
	Assume $D=3$.
	The rows and columns of $G$ are indexed by the following matrices, in the specified order:
\begin{center}
\begin{tabular}{cclllll}
	block 1 & & $A_3E_1^*-E_1^*A_3$ & & $A_3E_2^*-E_2^*A_3$ & & $A_3E_3^*-E_3^*A_3$ \\	
	block 2 & & $A_2E_1^*-E_1^*A_2$ & & $A_2E_2^*-E_2^*A_2$ & & $A_2E_3^*-E_3^*A_2$ \\	
	block 3 & & $AE_1^*-E_1^*A$ 	  & & $AE_2^*-E_2^*A$ 	& & $AE_3^*-E_3^*A$ \\	
	block 4 & & $A_2E_1^*A-AE_1^*A_2$ & & $A_2E_2^*A-AE_2^*A_2$ & & $A_2E_3^*A-AE_3^*A_2$.\\
\end{tabular}
\end{center}
\noindent So the matrix $G$ is $12\times 12$.
$G$ has the form	
	\[
	G=\begin{bmatrix}
	\mathbb{X}  	& 0	   		 	&  0      		& \mathbb{S}  \\
	0        		& \mathbb{Y}   	&  0      		& \mathbb{T}  \\
	0        		& 0        		&  \mathbb{Z} 	& \mathbb{U}  \\
	\mathbb{S}   	& \mathbb{T}	&  \mathbb{U}	& \mathbb{W}  \\
	\end{bmatrix},
	\]
where each block is a $3\times 3$ symmetric matrix as shown below.

\[\mathbb{X}=
\begin{bmatrix}
2k_3k					& -2k_3c_3   			&  -2k_3a_3  \\
-2k_3c_3	     		& 2k_3(k_2-p^3_{22})   	&  -2k_3p^3_{23}  \\
-2k_3a_3		  		& -2k_3p^3_{23} 		& 2k_3(k_3-p^3_{33}) 	  
\end{bmatrix},
\]

\[\mathbb{Y}=
\begin{bmatrix}
2k_2(k-c_2)				& -2k_2a_2		   		&  -2k_2b_2  \\
-2k_2a_2    			& 2k_2(k_2-p^2_{22})   	&  -2k_2p^2_{23}  \\
-2k_2b_2		  		& -2k_2p^2_{23} 		& 2k_2(k_3-p^2_{33}) 	  
\end{bmatrix},
\]

\[\mathbb{Z}=
\begin{bmatrix}
2k(k-a_1)	 			& -2kb_1		   		&  0  \\
-2kb_1		    		& 2k(k_2-p^1_{22})   	&  -2kp^1_{23}  \\
0			   			& -2kp^1_{23} 			& 2k(k_3-p^1_{33}) 	  
\end{bmatrix},
\]

\[\mathbb{S}=
\begin{bmatrix}
2k_2b_2(a_1-c_2) 	& 2k_2b_2(b_1-a_2)   &  -2k_2b_2^2   \\
2k_2b_2(b_1-a_2)    & 2k_2b_2(p^1_{22}-p^2_{22})   	
&  2k_2b_2(p^1_{23}-p^2_{23})  \\
-2k_2b_2^2			& 2k_2b_2(p^1_{23}-p^2_{23}) 	
	& 2k_2b_2(p^1_{33}-p^2_{33})	  
\end{bmatrix},
\]

\[\mathbb{T}=
\begin{bmatrix}
2k_2a_2(a_1-c_2) 		& 2k_2a_2(b_1-a_2)   &  -2k_2a_2b_2   \\
2k_2a_2(b_1-a_2)     	& 2k_2a_2(p^1_{22}-p^2_{22})   	
&  2k_2a_2(p^1_{23}-p^2_{23})  \\
-2k_2a_2b_2  	& 2k_2a_2(p^1_{23}-p^2_{23}) 	
& 2k_2a_2(p^1_{33}-p^2_{33})	  
\end{bmatrix},
\]

\[\mathbb{U}=
\begin{bmatrix}
2k_2c_2(a_1-c_2) 		& 2k_2c_2(b_1-a_2)   &  -2k_2c_2b_2   \\
2k_2c_2(b_1-a_2)     	& 2k_2c_2(p^1_{22}-p^2_{22})   	
&  2k_2c_2(p^1_{23}-p^2_{23})  \\
-2k_2c_2b_2  & 2k_2c_2(p^1_{23}-p^2_{23}) &2k_2c_2(p^1_{33}-p^2_{33})	  
\end{bmatrix}.
\]

\noindent The matrix $\mathbb{W}$ is symmetric with entries
\begin{align*}
{\mathbb W}_{11}
&=2\big(k^2k_2+\big(k_2a_1a_2-kb_1^2\big)a_1 			+\big(k_2(c_2(b_1-1)+a_2(a_2-a_1-1)&&\\
&\quad +b_2(c_3-1))-k_2a_2^2\big)c_2\big),\\
{\mathbb W}_{12}
&=2\big(\big(k_2a_1a_2-kb_1^2\big)b_1 +\big(k_2(c_2(b_1-1)+a_2(a_2-a_1-1)+b_2(c_3-1)) &&\\
&\quad -k_2a_2^2\big)a_2-k_3c_3^3\big),\\
{\mathbb W}_{13}
&=2\big(\big(k_2(c_2(b_1-1)+a_2(a_2-a_1-1)+b_2(c_3-1))-k_2a_2^2\big)b_2-k_3c_3^2a_3\big),\\
{\mathbb W}_{22}
&=2\big(kk_2^2+\big(k_2a_1a_2-kb_1^2\big)p^1_{22} +\big(k_2(c_2(b_1-1)+a_2(a_2-a_1-1) &&\\
&\quad +b_2(c_3-1))-k_2a_2^2\big)p^2_{22}-k_3c_3^2p^3_{22}\big),\\
{\mathbb W}_{23}
&=2\big(\big(k_2a_1a_2-kb_1^2\big)p^1_{23} +\big(k_2(c_2(b_1-1)+a_2(a_2-a_1-1)+b_2(c_3-1)) &&\\
&\quad -k_2a_2^2\big)p^2_{23}-k_3c_3^2p^3_{23}\big),\\
{\mathbb W}_{33}
&=2\big(kk_2k_3+\big(k_2a_1a_2-kb_1^2\big)p^1_{33} +\big(k_2(c_2(b_1-1)+a_2(a_2-a_1-1)&&\\
&\quad +b_2(c_3-1))-k_2a_2^2\big)p^2_{33}-k_3c_3^2p^3_{33}\big).
\end{align*}

\noindent From Appendix 1, we find
\begin{flalign*}
&p^1_{22}=\dfrac{k_2a_2}{k}, \enskip p^2_{22}=\dfrac{c_2(b_1-1)+a_2(a_2-a_1-1)+b_2(c_3-1)}{c_2}, 
\enskip p^3_{22}=\dfrac{c_3(a_2+a_3-a_1)}{c_2}, \\
&p^1_{23}=\dfrac{k_2b_2}{k},	\enskip p^2_{23}=\dfrac{b_2(a_3+a_2-a_1)}{c_2}, \enskip p^3_{23}=\dfrac{c_3(b_2-1)+a_3(a_3-a_1-1)-b_3}{c_2}, \\
&p^1_{33}=\dfrac{k_3a_3}{k},	
\enskip p^2_{33}=\dfrac{b_2(c_3(b_2-1)+a_3(a_3-a_1-1)-b_3)}{c_2c_3}, &&\\
&p^3_{33}=\dfrac{c_3b_{2}(a_3+a_2-a_1)}{c_2c_3} +\dfrac{(a_3-a_2)(c_3(b_2-1)+a_3(a_3-a_1-1)-b_3)}{c_2c_3} -\dfrac{b_1a_3}{c_3}. 
\end{flalign*}
\end{example}


\section{Acknowledgement}

The author would like to thank Professor Paul Terwilliger for many valuable ideas and suggestions.
This paper was written while the author was an Honorary Fellow at the University of Wisconsin-Madison (January 2017- January 2018) supported by  the Development and Promotion of Science and Technology Talents (DPST)
Project, Thailand.


%

\end{document}